\newcommand{\C}{\mathbb{C}}
\newcommand{\X}{\mathbb{X}}
\newcommand{\Y}{\mathbb{Y}}
\newcommand{\REG}{\mathsf{REG}}
\newcommand{\EX}{\mathsf{EX}}
\newcommand{\DEF}{\mathsf{DEF}}
\newcommand{\RAT}{\mathsf{RAT}}
\newcommand{\SET}{\mathsf{Set}}
\newcommand{\reglex}{\mathsf{reg/lex}}
\newcommand{\exreg}{\mathsf{ex/reg}}
\newcommand{\eff}{\mathsf{eff}}
\newcommand{\tab}{\mathsf{tab}}
\newcommand{\op}{\mathsf{op}}
\newcommand{\Split}{\mathsf{Split}}
\newcommand{\per}{\mathsf{per}}
\newcommand{\cor}{\mathsf{cor}}
\newcommand{\eq}{\mathsf{eq}}
\newcommand{\Map}{\mathsf{Map}}
\newcommand{\Rel}{\mathsf{Rel}}
\providecommand{\leftsquigarrow}{%
  \mathrel{\mathpalette\reflect@squig\relax}%
}
\newcommand{\reflect@squig}[2]{%
  \reflectbox{$\m@th#1\rightsquigarrow$}%
}
\author{Chad Nester \thanks{This research was supported by the ESF funded Estonian IT Academy research measure (project 2014-2020.4.05.19-0001).}}
\institute{Tallinn University of Technology, Tallinn, Estonia}
\title{A Variety Theorem for Relational Universal Algebra}
\begin{document}

\maketitle


\begin{abstract}
  We consider an analogue of universal algebra in which generating symbols are interpreted as relations. We prove a variety theorem for these relational algebraic theories, in which we find that their categories of models are precisely the definable categories. The syntax of our relational algebraic theories is string-diagrammatic, and can be seen as an extension of the usual term syntax for algebraic theories. 
\end{abstract}


\section{Introduction}

Universal algebra is the study of what is common to algebraic structures, such as groups and rings, by algebraic means. The central idea of universal algebra is that of a \emph{theory}, which is a syntactic description of some class of structures in terms of generating symbols and equations involving them. A \emph{model} of a theory is then a set equipped with a function for each generating symbol in a way that satisfies the equations. There is a further notion of \emph{model morphism}, and together the models and model morphisms of a given theory form a category. These categories of models are called \emph{varieties}. Much of classical algebra can be understood as the study of specific varieties. For example, group theory is the study of the variety of groups, which arises from the theory of groups in the manner outlined above. 

A given variety will in general arise as the models of more than one theory. A natural question to ask, then, is when two theories present the same variety. To obtain a satisfying answer to this question it is helpful to adopt a more abstract perspective. Theories become categories with finite products, models become functors, and model morphisms become natural transformations. Our reward for this shift in perspective is the following answer to our question: two theories present equivalent varieties in case they have equivalent idempotent splitting completions. Thus, from a certain point of view universal algebra is the study of categories with finite products.

This point of view has developed into \emph{categorical} universal algebra. For any sort of categorical structure we can treat categories with that structure as theories, functors that preserve it as models, and natural transformations thereof as model morphisms. The aim is then to figure out what sort of categories arise as models and model morphisms of this kind -- that is, to determine the appropriate notion of variety. For example, if we take categories with finite limits to be our theories, then varieties correspond to locally finitely presentable categories ~\cite{Adamek1994}.

The familiar syntax of classical algebra -- consisting of \emph{terms} built out of variables by application of the generating symbols -- is inextricably bound to finite product structure. In leaving finite products behind for more richly-structured settings, categorical universal algebra also leaves behind much of the syntactic elegance of its classical counterpart. While methods of specifying various sorts of theory (categories with structure) exist, these are often cumbersome, lacking the intuitive flavour of classical universal algebra. 

The present paper concerns an analogue of classical universal algebra in which the generating symbols are understood as \emph{relations} instead of functions. The role of classical terms is instead played by string diagrams, and categories with finite products become cartesian bicategories of relations in the sense of ~\cite{CarboniWalters1987} -- an idea that first appears in ~\cite{Bonchi2017}. This allows us to present relational algebraic theories in terms of generators and equations, in the style of classical universal algebra. In fact, this approach to syntax for relational theories extends the classical syntax for algebraic theories, which admits a similar diagrammatic presentation.

Our development is best understood in the context of recent work on partial algebraic theories ~\cite{partial-theories}, in which the string-diagrammatic syntax for algebraic theories is modified to capture partial functions. This modification of the basic syntax coincides with an increase in the expressive power of the framework, corresponding roughly to the equalizer completion of a category with finite products ~\cite{Bunge1995}. The move to relational algebraic theories involves a further modification of the string-diagrammatic syntax, corresponding roughly to the regular completion of a category with finite limits ~\cite{CarboniVitale1998}. Put another way, in ~\cite{partial-theories} the (string-diagrammatic) syntax for algebraic theories is extended to express a certain kind of equality, and the resulting terms denote partial functions. In this paper, we further extend the string-diagrammatic syntax to express existential quantification, and the resulting terms denote relations 

\emph{Contributions}. The central contribution of this paper is a variety theorem characterizing the categories that arise as the models and model morphisms of some relational algebraic theory (Theorem \ref{thm:relvariety}). Specifically, we will see that these are precisely the \emph{definable} categories of ~\cite{Kuber2018}. As a consequence we obtain that two relational algebraic theories present the same definable category if and only if splitting the partial equivalence relations in each yields equivalent categories (Theorem \ref{thm:persplit}). We illustrate the use of our framework with a number of examples, including the theory of regular semigroups ~\cite{Green1951} and the theory of effectoids ~\cite{Tate2013}. Lemma \ref{lem:easybicatrel} is also novel, and we consider it to be a minor contribution

\emph{Related Work}. The study of universal algebra began with the work of Birkhoff ~\cite{Birkhoff1935}. A few decades later, Lawvere introduced the categorical perspective in his doctoral thesis ~\cite{Lawvere1963}. A modern account of universal algebra from the categorical perspective is ~\cite{AlgebraicTheories2010}. A highlight of this account is the variety theorem for algebraic theories ~\cite{Adamek2003}, which our variety theorem for relational algebraic theories is explicitly modelled on. An important result in categorical algebra is Gabriel-Ulmer duality ~\cite{Gabriel1972}, which tells us that if we consider categories with finite limits as our notion of algebraic theory, then the corresponding notion of variety is that of a locally finitely presentable category ~\cite{Adamek1994}. Our development relies on the related notion of a definable category ~\cite{Kuber2018,LackTendas20}, which recently arose in the development of an analogue of Gabriel-Ulmer duality for regular categories. 

We use cartesian bicategories of relations ~\cite{CarboniWalters1987} as our notion of relational algebraic theory. Our development relies on several results from the theory of allegories ~\cite{Freyd90}, in which cartesian bicategories of relations coincide with the notion of a unitary pre-tabular allegory. We also make use of the theory of regular and exact completions ~\cite{CarboniVitale1998}. Of course, all of this relies on the theory of regular and exact categories ~\cite{Barr1971}. The idea of using string diagrams as terms in more general notions of algebraic theories is relatively recent, and relies on the work of Fox ~\cite{Fox76}. The present paper can be considered a generalisation of recent work on partial theories ~\cite{partial-theories} to include relations. The idea to treat cartesian bicategories of relations as theories with models in the category of sets and relations originally appeared in ~\cite{Bonchi2017}, although no variety theorem is provided therein.

\emph{Organization and Prerequisites}. In Section \ref{sec:algrelations} we introduce categories of abstract relations. In Section \ref{sec:examples} we give the definition of a relational algebraic theory, and provide a number of examples. Section \ref{sec:varietytheorem} contains the proof of the variety theorem. We assume familiarity with category theory, including regular categories ~\cite{Barr1971}, string diagrams for monoidal categories ~\cite{Joy91} and their connection to algebraic theories ~\cite{AlgebraicTheories2010}, and some 2-category theory ~\cite{Kelly1974}. We will behave as though all monoidal categories are \emph{strict} monoidal categories, justifying this behaviour in the usual way by appealing to the coherence theorem for monoidal categories ~\cite{Mac71}.


\section{The Algebra of Relations}\label{sec:algrelations}

In the context of algebraic theories, finite product structure serves as an algebra of functions. In this section, we consider an analogous algebra of relations. There are two perspectives from which to consider this algebra of relations: As internal relations in a regular category, or through cartesian bicategories of relations. The two perspectives are very closely related, and we require both: it is through regular categories that our development connects to the wider literature on categorical algebra, but our syntax for relational theories will be the string-diagrammatic syntax for cartesian bicategories of relations. 

To begin, we recall the category $\Rel$ of sets and relations, which will serve as the universe of models for relational theories in the same way that the category $\SET$ of sets and functions is the universe of models for classical algebraic theories. 
\begin{definition}\label{def:relations}
The category $\Rel$ has sets as objects, with arrows $f : X \to Y$ given by binary relations $f \subseteq X \times Y$. The composite of arrows $f : X \to Y$, $g : Y \to Z$ is defined by $fg = \{ (x,z) \mid \exists y \in Y . (x,y) \in f \wedge (y,z) \in g \}$, and the identity relation on $X$ is $\{ (x,x) \mid x \in X\}$. 
\end{definition}


\subsection{Categories of Internal Relations}

In any regular category we can construct an abstract analogue of Definition \ref{def:relations}. Instead of sub\emph{sets} $R \subseteq A \times B$, we represent relations as sub\emph{objects} $R \rightarrowtail A \times B$. This approach to categorifying the theory of relations has a relatively long history ~\cite{Freyd90}, and integrates well with standard categorical logic due to the ubiquity of regular categories there. 

\begin{definition}
  Let $\C$ be a regular category. The associated category of \emph{internal relations}, $\Rel(\C)$, is defined as follows:
  \begin{enumerate}[]
  \item \textbf{objects} are objects of $\C$
  \item \textbf{arrows} $r : A \to B$ are jointly monic spans $r = \langle f,g \rangle : R \rightarrowtail A \times B$ modulo equivalence as subobjects of $A \times B$. That is, $r : R \rightarrowtail A \times B$ and $r' : R' \rightarrowtail A \times B$ are equivalent (and thus define the same arrow of $\Rel(\C)$) in case there exists an isomorphism $\alpha : R \to R'$ such that $\alpha r' = r$. 
  \item \textbf{composition} of two arrows $r : A \to B$ and $s : B \to C$ given respectively by $\langle f,g \rangle : R \rightarrowtail A \times B$ and $\langle h,k \rangle : S \rightarrowtail B \times C$ is defined by first constructing the pullback of $h$ along $g$, pictured below on the left. This defines an arrow $\langle h'f,g'k \rangle : R \times_B S \to A \times C$. The composite $rs : A \to C$ is defined to be the monic part of the image factorization of this arrow, pictured below on the right.
    \begin{mathpar}
    \begin{tikzcd}
      R \times_B S \ar[dr,phantom,"\lrcorner" very near start] \ar[d,"h'"'] \ar[r,"g'"] & S \ar[d,"h"] \\
      R \ar[r,"g"'] & B
    \end{tikzcd}
        
    \begin{tikzcd}
      R \times_B S \ar[rd,twoheadrightarrow] \ar[rr,"{\langle h'f,g'k \rangle}"] && A \times C \\
      & RS \ar[ur,rightarrowtail,"rs"'] 
    \end{tikzcd}
    \end{mathpar}
  \item \textbf{identities} $1_A : A \to A$ are are given by diagonal maps $\Delta_A : A \rightarrowtail A \times A$. 
  \end{enumerate}
\end{definition}

\begin{example}
  $\SET$ is a regular category, and the category of internal relations in $\Rel(\SET)$ is precisely the usual category of sets and relations $\Rel$. 
\end{example}


\subsection{Cartesian Bicategories of Relations}

It is difficult to work with relations internal to a regular category directly. Routine calculations often involve complex interaction between pullbacks and image factorizations, and this quickly becomes intractable. A much more tractable setting for working with relations is provided by cartesian bicategories of relations, which admit a convenient graphical syntax.

Cartesian bicategories of relations are defined in terms of commutative special frobenius algebras, which provide the basic syntactic scaffolding of our approach: 

\begin{definition}
  Let $\X$ be a symmetric strict monoidal category. A \emph{commutative special frobenius algebra} in $\X$ is a 5-tuple $(X,\delta_X,\mu_X,\varepsilon_X,\eta_X)$, as in
  \begin{mathpar}
    {\delta_X \hspace{0.3cm} \leftrightsquigarrow \hspace{0.3cm} \includegraphics[height=1.2cm,align=c]{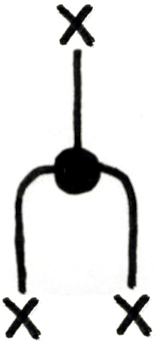}}

    {\mu_X \hspace{0.3cm} \leftrightsquigarrow \hspace{0.3cm}  \includegraphics[height=1.2cm,align=c]{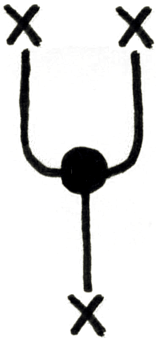}}

    {\varepsilon_X \hspace{0.3cm} \leftrightsquigarrow \hspace{0.3cm} \includegraphics[height=1.2cm,align=c]{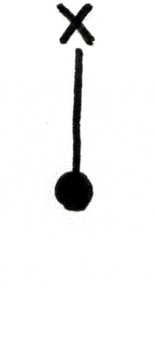}}

    {\eta_X \hspace{0.3cm} \leftrightsquigarrow \hspace{0.3cm} \includegraphics[height=1.2cm,align=c]{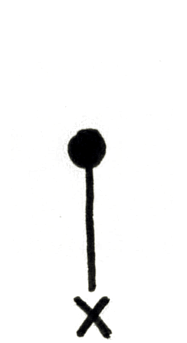}}
  \end{mathpar}
  such that
  \begin{enumerate}[(i)]
  \item $(X,\delta_X,\varepsilon_X)$ is a commutative comonoid:
    \begin{mathpar}
      \includegraphics[height=1cm,align=c]{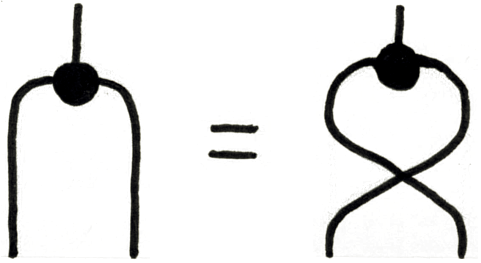}
      
      \includegraphics[height=1cm,align=c]{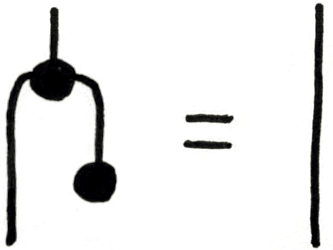}
    
      \includegraphics[height=1cm,align=c]{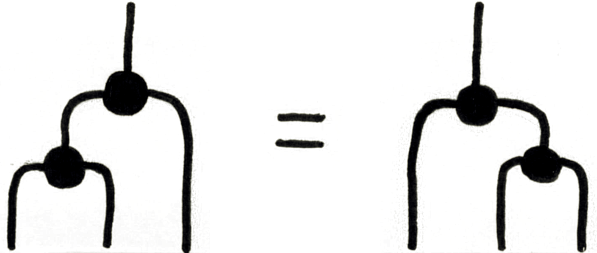}
    \end{mathpar}
  \item $(X,\mu_X,\eta_X)$ is a commutative monoid:
    \begin{mathpar}
      \includegraphics[height=1cm,align=c]{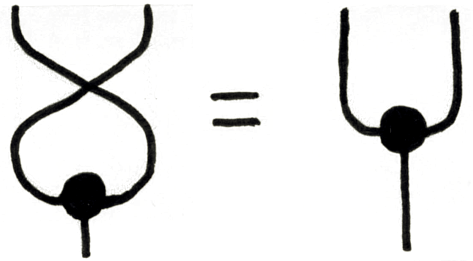}
      
      \includegraphics[height=1cm,align=c]{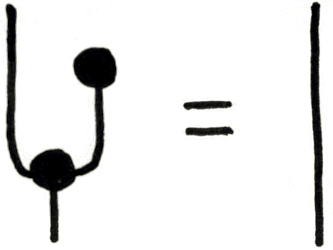}
      
      \includegraphics[height=1cm,align=c]{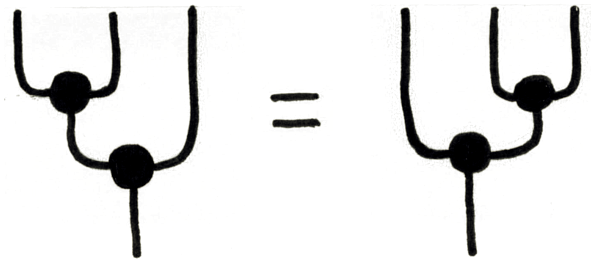}
    \end{mathpar}
  \item $\mu_X$ and $\delta_X$ satisfy the special and frobenius equations:
    \begin{mathpar}
      {\includegraphics[height=1cm,align=c]{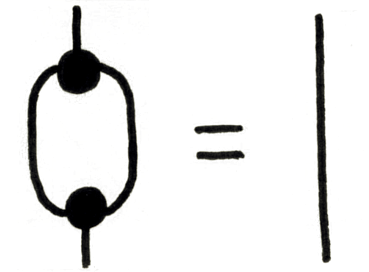}}
      
      {\includegraphics[height=1cm,align=c]{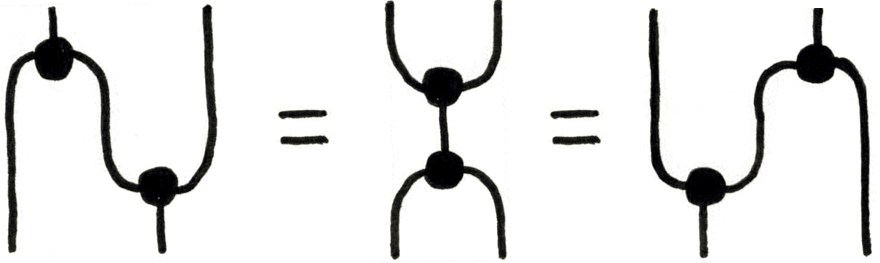}}
    \end{mathpar}
  \end{enumerate}
\end{definition}

An intermediate notion is that of a hypergraph category, in which objects are coherently equipped with commutative special frobenius algebra structure:

\begin{definition}
  A symmetric strict monoidal category $\X$ is called a \emph{hypergraph category} ~\cite{Fong2019} in case:
  \begin{enumerate}[(i)]
  \item Each object $X$ of $\X$ is equipped with a commutative special frobenius algebra.
  \item The frobenius algebra structure is coherent, i.~e., for all $X,Y$ we have:
    \begin{mathpar}
    \includegraphics[height=1.2cm,align=c]{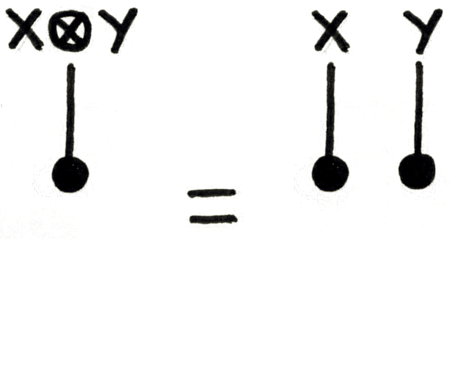}

    \includegraphics[height=1.2cm,align=c]{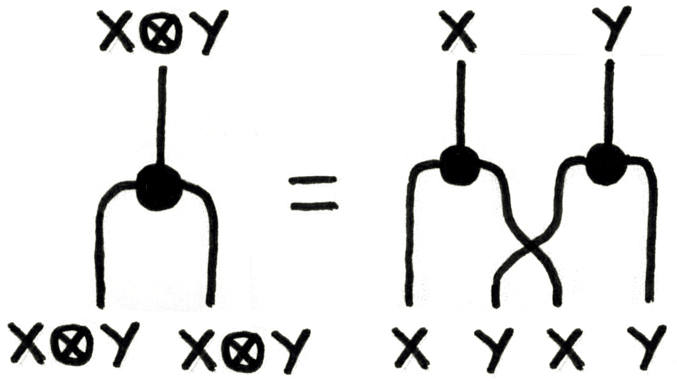}

    \includegraphics[height=1.2cm,align=c]{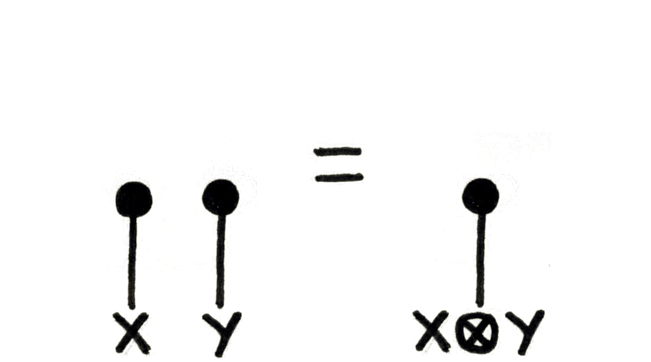}

    \includegraphics[height=1.2cm,align=c]{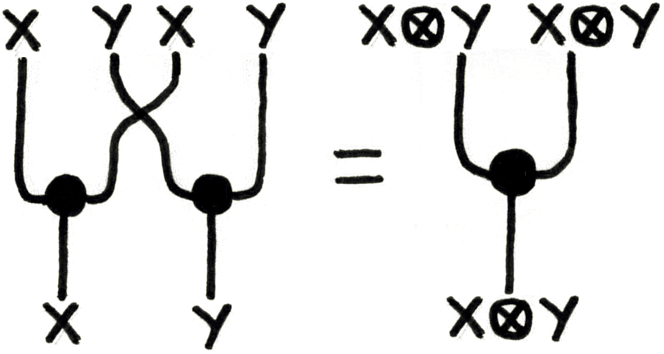}
    \end{mathpar}


      
  \end{enumerate}
\end{definition}

Now a cartesian bicategory of relations is a hypergraph category enjoying certain additional structure:

\begin{definition}\label{def:cartbicatrel}
  A \emph{cartesian bicategory of relations} ~\cite{CarboniWalters1987} is a poset-enriched hypergraph category $\X$ such that:
  \begin{enumerate}[(i)]
  \item The comonoid structure is \emph{lax natural}. That is, for all arrows $f$ of $\X$:
    \begin{mathpar}
    {\includegraphics[height=1cm,align=c]{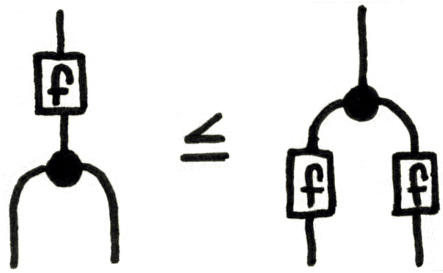}}

    {\includegraphics[height=1cm,align=c]{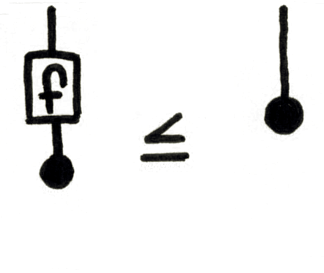}}
    \end{mathpar}
  \item Each of the frobenius algebras satisfy:
    \begin{mathpar}
    {\includegraphics[height=1cm,align=c]{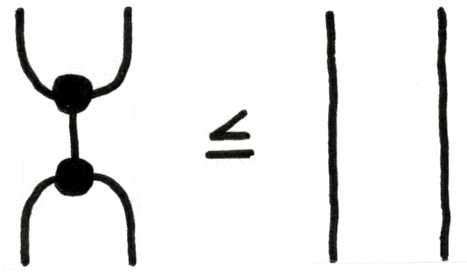}}

    {\includegraphics[height=1cm,align=c]{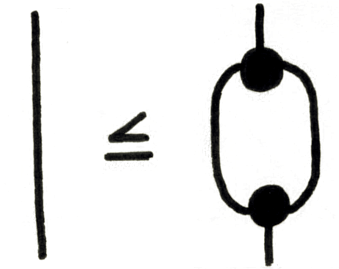}}
    
    {\includegraphics[height=1cm,align=c]{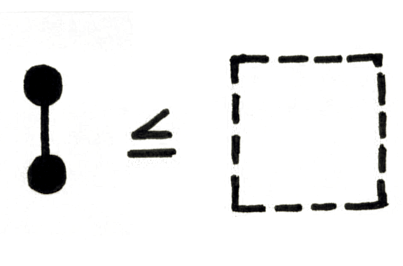}}

    {\includegraphics[height=1cm,align=c]{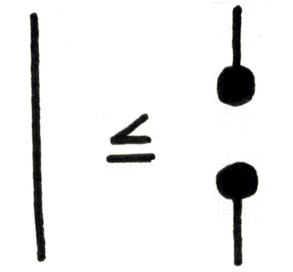}}
    \end{mathpar}
  \end{enumerate}
\end{definition}

\begin{example}
  The category $\Rel$ is a cartesian bicategory of relations with
  \begin{mathpar}
    \delta_X = \{ (x,(x,x)) \mid x \in X \}

    \mu_X = \{ ((x,x),x) \mid x \in X \}
    
    \varepsilon_X = \{ (x,*) \mid x \in X \}

    \eta_X = \{(*,x) \mid x \in X\}
  \end{mathpar}
  where $*$ is the unique element of the singleton set $I = \{*\}$. 
\end{example}

\begin{example}
  If $\C$ is a regular category then $\Rel(\C)$ is a cartesian bicategory of relations with $X \otimes Y = X \times Y$, $I = 1$, and 
  \begin{mathpar}
    \delta_X = \langle 1_X,\Delta_X \rangle : X \rightarrowtail X \times (X \times X)

    \mu_X = \langle \Delta_X,1_X \rangle : X \rightarrowtail (X \times X) \times X

    \varepsilon_X = \langle 1_X,!_X \rangle : X \rightarrowtail X \times 1

    \eta_X = \langle !_X,1_X \rangle : X \rightarrowtail 1 \times X
  \end{mathpar}
  Where $\Delta_X$ is the diagonal morphism and $!_X$ is the unique morphism into the terminal object $1$ of $\C$. 
\end{example}

Cartesian bicategories of relations admit meets of hom-sets:
\begin{lemma}[~\cite{Bonchi2017}]\label{lem:meets}
  Every cartesian bicategory of relations has meets of parallel arrows, with $f \cap g$ for $f,g : X \to Y$ defined by
  \[
  \includegraphics[height=1.2cm,align=c]{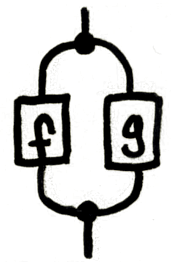}
  \]
  Further, the meet determines the poset-enrichment in that $f \leq g \Leftrightarrow f \cap g = f$.

\end{lemma}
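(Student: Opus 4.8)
The diagram defines $f \cap g = \delta_X ; (f \otimes g) ; \mu_Y$ (writing composition in diagrammatic order, left to right): it copies the input with $\delta_X$, runs $f$ and $g$ in parallel on the two copies, and recombines the outputs with $\mu_Y$. A quick check in $\Rel$, where $\delta$ and $\mu$ are the diagonal and its converse, recovers the set-theoretic intersection and confirms that this is the right formula. To prove the lemma it suffices to establish three things, all by purely diagrammatic reasoning in the axioms of Definition~\ref{def:cartbicatrel}: that $f\cap g$ is a lower bound of $f$ and $g$; that it is the greatest such; and that $f\le g \Leftrightarrow f\cap g = f$. The poset-enrichment keeps every manipulation inequational, and I would carry them out as string-diagram rewrites.

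For the lower bound I would show $f\cap g\le f$ and argue $f\cap g\le g$ symmetrically. The idea is to discard the $g$-branch. First, from the inequations of Definition~\ref{def:cartbicatrel}(ii) together with the counit law one derives the ``projection'' inequality $\mu_Y\le 1_Y\otimes\varepsilon_Y$. Substituting this and using functoriality of $\otimes$ gives $f\cap g\le \delta_X;(f\otimes g);(1_Y\otimes\varepsilon_Y) = \delta_X;(f\otimes(g;\varepsilon_Y))$. Lax naturality of $\varepsilon$ gives $g;\varepsilon_Y\le\varepsilon_X$, so monotonicity yields $\le \delta_X;(f\otimes\varepsilon_X)=\delta_X;(1_X\otimes\varepsilon_X);f$, and the counit law $\delta_X;(1_X\otimes\varepsilon_X)=1_X$ collapses this to $f$.

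For the greatest-lower-bound property, suppose $h\le f$ and $h\le g$. Using the special equation $\delta_Y;\mu_Y=1_Y$ I would write $h = h;\delta_Y;\mu_Y$, then apply lax naturality of $\delta$, namely $h;\delta_Y\le\delta_X;(h\otimes h)$, to obtain $h\le \delta_X;(h\otimes h);\mu_Y$. Monotonicity of composition and of $\otimes$ together with $h\le f$ and $h\le g$ then give $\delta_X;(h\otimes h);\mu_Y\le \delta_X;(f\otimes g);\mu_Y = f\cap g$, so $h\le f\cap g$. This direction is the cleaner of the two, needing only the special law and lax naturality of the comultiplication, both of which are shared by every hypergraph category.

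Finally, the characterization of the order is immediate from the universal property just proved: $f\cap g$ is the greatest lower bound of $\{f,g\}$, so if $f\le g$ then $f$ is itself a lower bound and hence $f\le f\cap g$, while $f\cap g\le f$ always holds, giving $f\cap g=f$ by antisymmetry; conversely $f\cap g=f$ forces $f=f\cap g\le g$. I expect the main obstacle to be the lower-bound step. Unlike the greatest-lower-bound step, discarding the $g$-branch genuinely requires the relation-specific inequations of Definition~\ref{def:cartbicatrel}(ii), which encode the adjunctions $\delta_X\dashv\mu_X$ and $\varepsilon_X\dashv\eta_X$; extracting the projection inequality $\mu_Y\le 1_Y\otimes\varepsilon_Y$ from them is the one non-formal computation in the argument, and the point at which the axioms beyond plain special Frobenius structure are actually used.
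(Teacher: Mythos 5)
Your proof is correct. The paper itself gives no argument for this lemma, citing it to Bonchi et al.; your diagrammatic verification is the standard one from that literature, and every step checks out --- in particular the projection inequality follows as you suggest, since $\mu_Y = \mu_Y;\delta_Y;(1_Y\otimes\varepsilon_Y) \le 1_{Y\otimes Y};(1_Y\otimes\varepsilon_Y) = 1_Y\otimes\varepsilon_Y$ using the counit law and the adjunction counit $\mu_Y;\delta_Y\le 1_{Y\otimes Y}$ from Definition~\ref{def:cartbicatrel}(ii).
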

We point out this allows for a much simpler presentation, as in:

\begin{lemma}\label{lem:easybicatrel}
  A hypergraph category $\X$ is a cartesian bicategory of relations if and only if for each arrow $f$:
  \[
  \includegraphics[height=1.2cm,align=c]{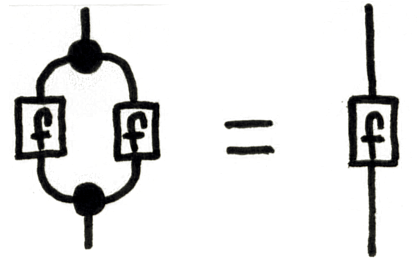}
  \]
\end{lemma}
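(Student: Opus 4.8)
The statement is a biconditional, so the plan is to prove the two implications separately, reading the displayed inequation through Lemma~\ref{lem:meets}: since that lemma exhibits $f \cap g = \delta_X(f \otimes g)\mu_Y$, the condition ``for each arrow $f$'' is exactly the assertion that $f \leq f \cap f$, i.e.\ that $f$ is a lax comonoid homomorphism in the precise sense made available by the meet. Throughout I would use the two features that come for free from the hypothesis that $\X$ is a poset-enriched hypergraph category: composition and tensoring are monotone, and the commutative special Frobenius equations hold at every object.

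The forward direction is immediate. If $\X$ is a cartesian bicategory of relations then Lemma~\ref{lem:meets} applies, so $\delta_X(f \otimes f)\mu_Y$ really is the meet $f \cap f$; as meets are idempotent we get $f \cap f = f$, and in particular $f \leq f \cap f$, which is the displayed inequation. (This direction in fact yields the equality, not merely the inequality.)

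For the converse I would assume the inequation for every $f$ and recover conditions (i) and (ii) of Definition~\ref{def:cartbicatrel} one at a time, reducing each to the single hypothesis by composing it with structural maps and collapsing the resulting diagrams using the special, counit, and Frobenius laws together with monotonicity. The lax-naturality inequations of (i) arise by post-composing the hypothesis with $\delta_Y$ (respectively with a counit) and simplifying the composite $\mu_Y\delta_Y$ via the Frobenius law; the Frobenius inequations of (ii) come from instantiating the hypothesis at the structural generators $\delta_X,\mu_X,\varepsilon_X,\eta_X$ in place of $f$ and rewriting.

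The main obstacle is this converse direction, which is essentially a bookkeeping exercise in the string-diagram calculus. The delicate point is that the discreteness-type inequations of (ii) and the lax-naturality inequations of (i) are interdependent, so one must carry out the reductions in an order that never presupposes an inequation not yet established; the Frobenius law is the workhorse that lets a composite of the hypothesis with structural maps be rewritten into the desired shape, and monotonicity is what propagates the single inequation to the remaining five. Once all the inequations of Definition~\ref{def:cartbicatrel} are in hand, $\X$ is a cartesian bicategory of relations by definition, closing the equivalence.
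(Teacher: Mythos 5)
Your proposal founders on how it reads the hypothesis. The statement is about a \emph{hypergraph category} $\X$, which carries no poset-enrichment; the displayed condition must therefore be the \emph{equation} $f = \delta_X(f\otimes f)\mu_Y$ (the diagram of Lemma~\ref{lem:meets} instantiated at $g=f$), and the real content of the lemma is that the enrichment demanded by Definition~\ref{def:cartbicatrel} can be \emph{constructed} from this equation, by declaring $f \leq g$ exactly when $\delta_X(f\otimes g)\mu_Y = f$, as licensed by the last sentence of Lemma~\ref{lem:meets}. You instead take the order as given and read the condition as the inequality $f \leq f\cap f$. Under that reading the converse direction is simply false: equip $\Rel$ with the reversed (or the discrete) order on each hom-set. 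This is still a poset-enriched hypergraph category, and since $f = \delta_X(f\otimes f)\mu_Y$ holds on the nose in $\Rel$, your hypothesis $f \leq f\cap f$ is satisfied for every $f$; yet lax naturality of $\delta$ fails for that order, so the result is not a cartesian bicategory of relations. This also explains why your plan of ``propagating the single inequation by monotonicity'' cannot succeed: from a lower bound $f \leq f\cap f$ one can never derive the upper-bound inequations of Definition~\ref{def:cartbicatrel}(ii) such as $\mu_X\delta_X \leq 1_{X\otimes X}$.

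Concretely, then, the missing ideas are: (a) define the candidate order by $f \leq g \Leftrightarrow \delta_X(f\otimes g)\mu_Y = f$; (b) prove it is a partial order (reflexivity is the hypothesis, antisymmetry uses (co)commutativity, transitivity uses (co)associativity); (c) prove that composition and tensoring are monotone for it --- this is the genuinely delicate step, where the Frobenius and special laws do the real work; and only then (d) derive the lax-naturality and adjointness inequations, which at that point do follow by calculations of roughly the shape you gesture at. Your forward direction is fine (and your parenthetical remark that it yields an equality rather than an inequality was the clue that the condition had to be an equation all along), but as written the converse both targets a false statement and, even after correcting the reading, leaves the entire order-theoretic construction --- which is the substance of the lemma --- unaddressed.
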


We will require a 2-category of cartesian bicategories of relations in our development. Our notion of 1-cell is a structure-preserving functor as in:
\begin{definition}
  A \emph{morphism} of cartesian bicategories of relations $F : \X \to \Y$ is a strict monoidal functor that preserves the frobenius algebra structure:
  \begin{mathpar}
%
%
    F(\delta_X) = \delta_{FX}

    F(\mu_X) = \mu_{FX}
    
    F(\varepsilon_X) = \varepsilon_{FX}

    F(\eta_X) = \eta_{FX}
  \end{mathpar}
\end{definition}
and the correct sort of 2-cell turns out to be a \emph{lax} natural transformation:
\begin{definition}
  Let $\X,\Y$ be cartesian bicategories of relations, and let $F,G : \X \to \Y$ be morphisms thereof. Then a \emph{lax transformation} $\alpha : F \to G$ consists of an $\X_0$-indexed family of arrows $\alpha_X : F(X) \to G(X)$ such that for each arrow $f : X \to Y$ of $\X$ we have $F(f)\alpha_Y \leq \alpha_XG(f)$ in $\Y$.
\end{definition}

\begin{definition}\label{def:rat}
  Let $\RAT$ be the 2-category with cartesian bicategories of relations as 0-cells, their morphisms as 1-cells, and lax transformations as 2-cells.
\end{definition}

An important class of arrows in a cartesian bicateory of relations are the \emph{maps}, which should be thought of as those relations that happen to be functions.

\begin{definition}[Maps]
  An arrow $f : X \to Y$ in a cartesian bicategory of relations is called:
  \begin{enumerate}[(i)]
  \item \emph{simple} in case the equation below on the left holds.
  \item \emph{total} in case the equation below on the right holds.
    \begin{mathpar}
    \includegraphics[height=1cm,align=c]{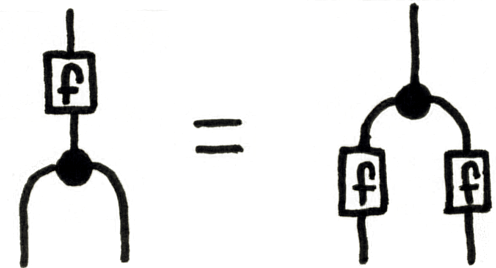}
      
    \includegraphics[height=1cm,align=c]{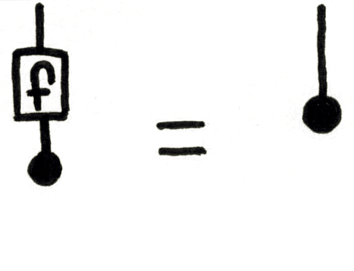}
    \end{mathpar}
  \item A \emph{map} in case it is both simple and total.
  \end{enumerate}
\end{definition}
The maps of a cartesian bicategory of relations always form a subcategory $\Map(\X)$. For example, $\Map(\Rel) \cong \SET$. More generally:


\begin{theorem}[~\cite{Freyd90}]
  For $\C$ a regular category, there is an equivalence of categories $\C \simeq \Map(\Rel(\C))$.
\end{theorem}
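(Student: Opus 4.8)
The plan is to exhibit an explicit functor $\Gamma : \C \to \Map(\Rel(\C))$, show it is bijective on objects and fully faithful, and conclude that it is an equivalence. On objects $\Gamma$ is the identity, since the objects of $\Rel(\C)$ are by definition those of $\C$. On a morphism $h : A \to B$ of $\C$ I would send $h$ to its graph, the jointly monic span $\langle 1_A, h\rangle : A \rightarrowtail A \times B$; this is jointly monic because its first leg $1_A$ is monic. To see that $\Gamma$ lands in $\Map(\Rel(\C))$ I would check that every such graph is simple and total, which at the level of spans amounts to the domain leg $1_A$ being respectively monic and a cover -- both of which hold trivially since $1_A$ is an isomorphism. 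Functoriality is routine: $\Gamma(1_A) = \langle 1_A, 1_A\rangle = \Delta_A$ is the identity of $\Rel(\C)$, and because one leg of a graph is an identity, the pullback and image factorization defining composition in $\Rel(\C)$ degenerate, yielding $\Gamma(h)\Gamma(k) = \langle 1_A, hk\rangle = \Gamma(hk)$.

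Faithfulness is immediate: if $\langle 1_A, h\rangle$ and $\langle 1_A, h'\rangle$ agree as subobjects of $A \times B$, the connecting isomorphism $\alpha$ satisfies $\alpha 1_A = 1_A$ and $\alpha h' = h$, so $\alpha = 1_A$ and $h = h'$. The substance of the argument is fullness, for which I would prove the following span-level characterization of maps: a jointly monic span $\langle f, g\rangle : R \rightarrowtail A \times B$ is simple if and only if the kernel pair of the domain leg $f$ is contained in the kernel pair of $g$, and is total if and only if $f$ is a cover. Given these, joint monicity together with simplicity forces the kernel pair of $f$ to be trivial, so that $f$ is monic; totality makes $f$ a cover; and in a regular category a morphism that is simultaneously monic and a cover is an isomorphism. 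Hence every map $\langle f, g\rangle$ is equivalent as a subobject to the graph $\langle 1_A, g f^{-1}\rangle = \Gamma(g f^{-1})$, witnessing fullness.

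The main obstacle is establishing this span-level characterization of simplicity and totality, since the definitions of simple and total are given string-diagrammatically (equivalently, in terms of the converse relation $\langle g, f\rangle$ and the ambient composition). Extracting the stated conditions requires unwinding the relevant composites through the pullback-then-image recipe: the simplicity composite is the image of $\langle g p_1, g p_2\rangle$ along the kernel pair $p_1, p_2$ of $f$, whose containment in the diagonal $\Delta_B$ is exactly the kernel-pair inclusion above, while totality compares the diagonal $\Delta_A$ with the domain idempotent of $\langle f, g\rangle$, which measures precisely the image of $f$. Once this dictionary between the relational equations and the properties of the legs is in place, the remaining steps are the routine verifications sketched above, and the equivalence follows since $\Gamma$ is then bijective on objects and fully faithful.
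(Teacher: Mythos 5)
Your argument is correct, and it is essentially the standard proof of this fact (it is the argument given in Freyd--Scedrov and in Carboni--Walters); the paper itself does not prove this theorem but only cites it from~\cite{Freyd90}, so there is no in-paper proof to compare against. The graph functor, the span-level dictionary (simple $\Leftrightarrow$ $\ker f \subseteq \ker g$, total $\Leftrightarrow$ $f$ a cover), and the conclusion that joint monicity plus these two conditions force the domain leg to be monic and a cover, hence invertible, are exactly the right ingredients. One small imprecision: for the graph $\langle 1_A,h\rangle$ you say simplicity ``amounts to'' the domain leg being monic, whereas by your own characterization monicity of the domain leg is merely sufficient (it makes its kernel pair trivial); this does not affect the argument, since $1_A$ is monic.
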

Remarkably, the components of lax transformations are always maps:
\begin{lemma}[~\cite{Bonchi2017}]\label{lem:totalcomponents}
  If $\X,\Y$ are cartesian bicategories of relations, $F,G : \X \to \Y$ are morphisms thereof and $\alpha : F \to G$ is a lax transformation, then each component $\alpha_X : FX \to GX$ of $\alpha$ is necessarily a map. 
\end{lemma}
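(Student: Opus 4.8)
The plan is to show that each component $\alpha_X$ is both \emph{simple} and \emph{total}, and hence a map. The key observation is that the two defining equations for maps each assert that one of the two lax-naturality inequalities of the comonoid structure holds as an \emph{equality}: unwinding the string-diagrammatic definitions, $\alpha_X$ is total exactly when $\alpha_X\varepsilon_{GX} = \varepsilon_{FX}$, and simple exactly when $\alpha_X\delta_{GX} = \delta_{FX}(\alpha_X \otimes \alpha_X)$. Now Definition \ref{def:cartbicatrel}(i), applied to the arrow $\alpha_X$ of $\Y$, already furnishes the inequalities $\alpha_X\varepsilon_{GX} \le \varepsilon_{FX}$ and $\alpha_X\delta_{GX} \le \delta_{FX}(\alpha_X \otimes \alpha_X)$ for free. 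So the entire content of the lemma is to establish the two \emph{reverse} inequalities, and it is here that the lax naturality of $\alpha$ itself (rather than that of the comonoid structure) must be used.

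For totality I would instantiate the lax-naturality square of $\alpha$ at the counit $\varepsilon_X : X \to I$. Since $F$ and $G$ preserve counits, this reads $\varepsilon_{FX}\,\alpha_I \le \alpha_X\,\varepsilon_{GX}$. Provided one knows $1_I \le \alpha_I$, monotonicity of composition gives $\varepsilon_{FX} = \varepsilon_{FX}\,1_I \le \varepsilon_{FX}\,\alpha_I \le \alpha_X\,\varepsilon_{GX}$, the missing inequality. For simplicity I would instead instantiate at the comultiplication $\delta_X : X \to X \otimes X$; since $F,G$ preserve comultiplication and are strict monoidal, the square becomes $\delta_{FX}\,\alpha_{X \otimes X} \le \alpha_X\,\delta_{GX}$, and provided one knows $\alpha_X \otimes \alpha_X \le \alpha_{X \otimes X}$, monotonicity yields $\delta_{FX}(\alpha_X \otimes \alpha_X) \le \delta_{FX}\,\alpha_{X\otimes X} \le \alpha_X\,\delta_{GX}$, again the missing inequality. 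Combining each reverse inequality with its free counterpart gives the two equalities, so $\alpha_X$ is simple and total, hence a map.

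The main obstacle is precisely the two comparisons invoked above, $1_I \le \alpha_I$ and $\alpha_X \otimes \alpha_X \le \alpha_{X \otimes X}$, which amount to a monoidal compatibility (oplax monoidality) for the family $\alpha$. These are not immediate from the bare definition of a lax transformation, which constrains $\alpha$ only through its naturality squares, so genuine work with the Frobenius structure is required. My plan here is to derive them by instantiating lax naturality at the remaining Frobenius generators --- the unit $\eta_X$ and the cup $\eta_X\delta_X : I \to X \otimes X$, together with the multiplication $\mu_X$ and the cap --- and then collapsing the resulting composites using the special and Frobenius equations (and, for manipulating the meets that appear, Lemma \ref{lem:meets} and the characterisation of Lemma \ref{lem:easybicatrel}). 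I expect the unit comparison $1_I \le \alpha_I$ to be the more delicate of the two, since it is the seed from which totality, and then via the snake equations the tensor comparison, should follow. An alternative I would keep in reserve is to recast the whole argument as the statement that $\alpha_X$ is left adjoint to its converse, proving the unit and counit of that adjunction directly from the naturality squares.
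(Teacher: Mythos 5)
Your first two paragraphs are exactly the intended argument: totality and simplicity of $\alpha_X$ are the reverse inequalities to the two lax-naturality inequalities of Definition~\ref{def:cartbicatrel}(i), and they follow by instantiating the lax-naturality square of $\alpha$ at $\varepsilon_X$ and at $\delta_X$, \emph{given} the two comparisons $1_I \le \alpha_I$ and $\alpha_X\otimes\alpha_X\le\alpha_{X\otimes X}$. The genuine gap is in your third paragraph: those comparisons cannot be derived from the naturality squares, no matter which Frobenius generators you instantiate them at. Concretely, take $\X=\Y=\Rel$, $F=G=\mathrm{Id}$, and $\alpha_X=\emptyset$ (the empty relation) for every set $X$. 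Then $F(f)\alpha_Y=f;\emptyset=\emptyset\le\emptyset=\emptyset;f=\alpha_X G(f)$ for every arrow $f$, so this family satisfies every instance of the lax-naturality condition as stated; yet $\alpha_I=\emptyset$, so $1_I\not\le\alpha_I$, and no component at a nonempty set is total. The same example defeats your reserve plan of exhibiting $\alpha_X$ as a left adjoint to its converse, since that adjunction's unit is precisely totality. So any purported derivation of $1_I\le\alpha_I$ from the squares alone must contain an error, and the Frobenius manipulations you sketch will not close the gap.

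The resolution is that the lemma holds only for \emph{monoidal} lax transformations: in \cite{Bonchi2017} the definition includes the conditions $\alpha_I=1_I$ and $\alpha_{X\otimes Y}=\alpha_X\otimes\alpha_Y$, which the abbreviated definition reproduced in this paper elides (and which the intended 2-categorical statements, e.g.\ matching lax transformations with natural transformations of regular functors, require). Once these are assumed as part of the definition rather than derived, your first two paragraphs already constitute the complete and standard proof, and neither Lemma~\ref{lem:meets} nor Lemma~\ref{lem:easybicatrel} is needed. You deserve credit for isolating exactly the two comparisons on which everything turns; the mistake is only in believing they are provable rather than definitional.
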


\section{Relational Algebraic Theories}\label{sec:examples}

In this section we define relational algebraic theories along with the models and model morphisms, and consider a number of examples. 

\begin{definition}~\cite{Bonchi2017}
  A \emph{relational algebraic theory} is a cartesian bicategory of relations. A \emph{model} of a relational algebraic theory $\X$ is a morphism of cartesian bicategories of relations $F : \X \to \Rel$. A \emph{model morphism} $\alpha : F \to G$ is a lax transformation. 
\end{definition}

It is convenient to present relational algebraic theories somewhat informally in terms of string-diagrammatic generators and (in)equations between them, with the structure of a cartesian bicategory of relations implicitly present. A more formal account would proceed in terms of monoidal equational theories, from which the cartesian bicategory of relations giving the associated relational algebraic theory may be freely constructed ~\cite{Bonchi2017}.

\begin{example}[Sets]
  The relational algebraic theory with no generators and no equations has sets as models and functions as model morphisms (see Lemma \ref{lem:totalcomponents}), and so the associated category of models is $\SET$.
\end{example}

\begin{example}[Posets]
  Consider the relational theory with a single generator (below left) which is required to be reflexive, transitive, and antisymmetric:
  \begin{mathpar}
    \includegraphics[height=1cm,align=c]{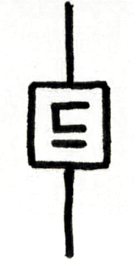}
    
  \includegraphics[height=1cm,align=c]{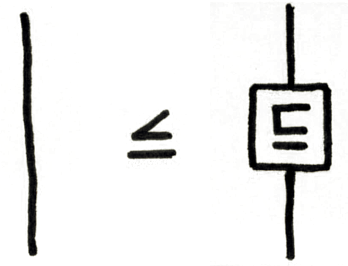}

  \includegraphics[height=1cm,align=c]{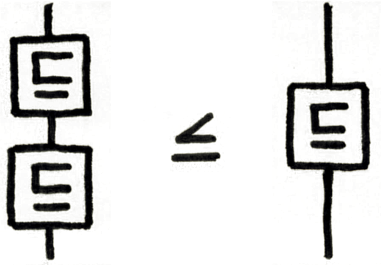}

  \includegraphics[height=1cm,align=c]{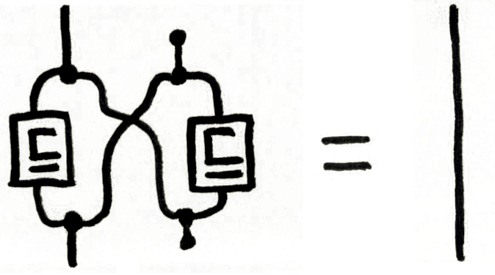}
  \end{mathpar}
  The associated category of models is the category of posets and monotone maps.
\end{example}

\begin{example}[Nonempty Sets]
  Consider the relational theory with no generating symbols and a single equation:
  \[
  \includegraphics[height=1cm,align=c]{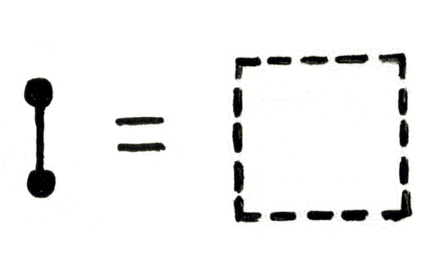}
  \]
  Models of the associated relational algebraic theory are sets $X$ such that the generating equation is satisfied in $\Rel$: 
  \[ \eta_X\varepsilon_X = \{(*,*)\} = \square_I \] 
  where $\eta_X$ and $\varepsilon_X$ are defined as in Definition \ref{def:relations}. If we calculate the relational composite, we find that:
  \begin{align*}\eta_X\varepsilon_X &= \{(*,*) \mid \exists x \in X. (*,x) \in \eta_X \wedge (x,*) \in \varepsilon_X \} = \{ (*,*) \mid \exists x \in X \} \end{align*}
  and so models are nonempty sets. The theory of nonempty sets contains no generating morphisms, and so model morphisms are simply functions. Contrast this to the category of \emph{pointed} sets, in which morphisms must preserve the point.
\end{example}

\begin{example}[Regular Semigroups]\label{ex:regsemi}
  A \emph{semigroup} is a set equipped with an associative binary operation, denoted by juxtaposition. A semigroup $S$ is \emph{regular} ~\cite{Green1951} in case
  \[ \forall a \in S . \exists x \in S. axa = a\]
  The relational theory of semigroups has a single generating symbol (below left) which is required to be simple, total, and associative:
  \begin{mathpar}
    \includegraphics[height=1cm,align=c]{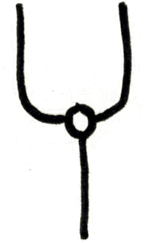}
    
    \includegraphics[height=1cm,align=c]{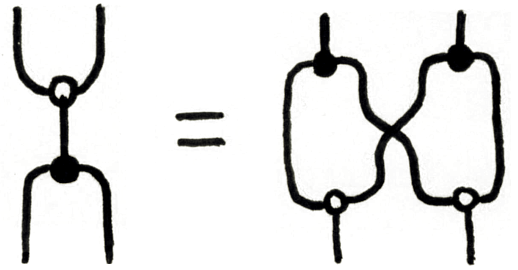}

    \includegraphics[height=1cm,align=c]{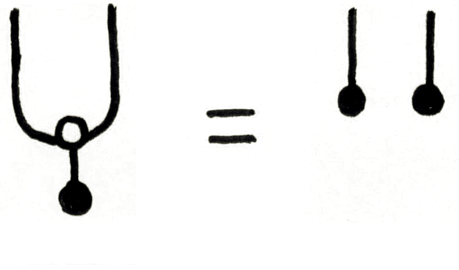}
    
    \includegraphics[height=1cm,align=c]{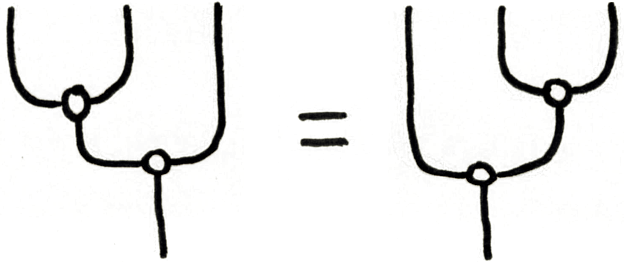}
  \end{mathpar}
  To capture the regular semigroups we include the following equation:
  \[
  \includegraphics[height=1cm,align=c]{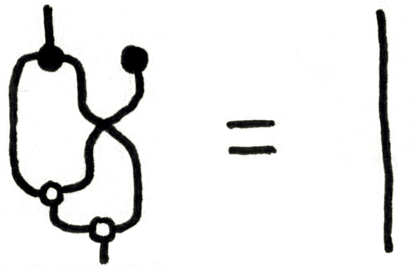}
  \]
  The associated category of models is the category of regular semigroups and semigroup homomorphisms. 
\end{example}


\begin{example}[Effectoids]\label{ex:effectoids}
  An \emph{effectoid} ~\cite{Tate2013} is a set $A$ equipped with a unary relation $\cancel{\varepsilon} \mapsto \_ \subseteq A$, a binary relation $\_ \preceq \_ \subseteq A \times A$, and a ternary relation $\_\,;\_\mapsto \_ \subseteq A \times A \times A$ satisfying:
\begin{enumerate}[]
  \item (Identity) For all $a,a' \in A$,
    \begin{mathpar} \exists x \in A. (\cancel{\varepsilon} \mapsto x) \wedge (x\,;a \mapsto a') \Leftrightarrow a \preceq a' \Leftrightarrow \exists y \in A. (\cancel{\varepsilon} \mapsto y) \wedge (a\,;y \mapsto a')
    \end{mathpar}
  \item (Associativity) For all $a,b,c,d \in A$, 
    \begin{mathpar}
      \exists x . (a\,;b \mapsto x) \wedge (x\,;c \mapsto d)  \Leftrightarrow  \exists y . (b\,;c \mapsto y) \wedge (a\,;y \mapsto d)
    \end{mathpar}
  \item (Reflexive Congruence 1) For all $a \in A$, $a \preceq a$.
  \item (Reflexive Congruence 2) For all $a,a' \in A$, $(\cancel{\varepsilon} \mapsto a) \wedge (a \preceq a') \Rightarrow (\cancel{\varepsilon} \mapsto a')$
  \item (Reflexive Congruence 3) For all $a,b,c \in A$, $\exists x . (a\,;b \mapsto x) \wedge (x \preceq c) \Rightarrow (a\,;b \preceq c)$
  \end{enumerate}

To obtain a relational theory of effectoids, we ask for three generating symbols corresponding respectively to the unary, binary, and ternary relation:
  \begin{mathpar}
    \includegraphics[height=1cm,align=c]{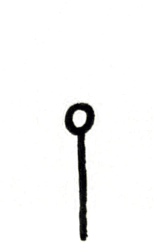}

    \includegraphics[height=1cm,align=c]{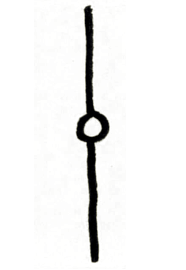}
    
    \includegraphics[height=1cm,align=c]{figs/string-diagram-circ.png}
  \end{mathpar}
  Then the identity and associativity axioms become:
  \begin{mathpar}
    \includegraphics[height=1cm,align=c]{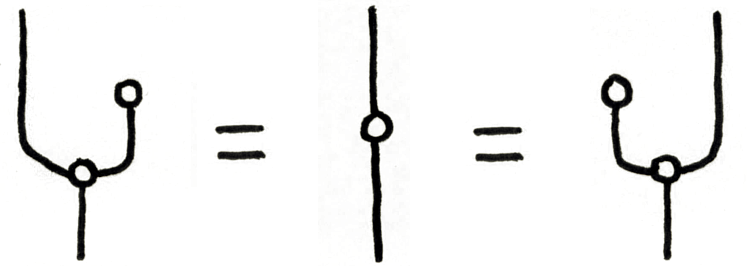}

    \includegraphics[height=1cm,align=c]{figs/associativity-axiom.png}
  \end{mathpar}
  And the reflexive congruence axioms become:
  \begin{mathpar}
    \includegraphics[height=1cm,align=c]{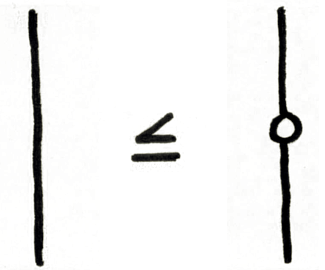}

    \includegraphics[height=1cm,align=c]{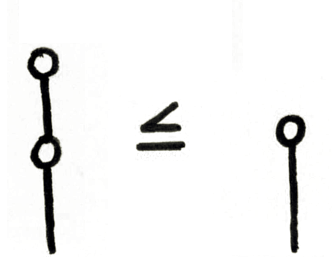}

    \includegraphics[height=1cm,align=c]{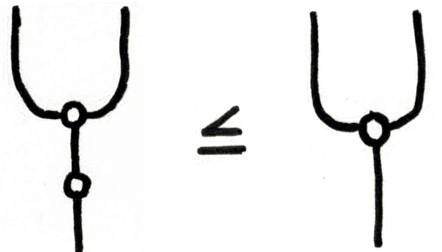}
  \end{mathpar}
  The models of this relational theory are precisely the effectoids.
\end{example}

\begin{example}[Generalized Separation Algebras]
  A \emph{generalized separation algebra} ~\cite{Jipsen18} is a partial monoid satisfying the left and right cancellativity axioms, which further satisfies the conjugation axiom:
  \[ \forall x,y. (\exists z.x \circ z = y) \Leftrightarrow (\exists w.w \circ x = y) \]
  To capture generalized separation algebras as a relational algebraic theory, we require two generating symbols in the generating monoidal equational theory, corresponding to the monoid operation and the unit:
  \begin{mathpar}
    \includegraphics[height=1cm,align=c]{figs/string-diagram-circ.png}

    \includegraphics[height=1cm,align=c]{figs/string-diagram-unit.png}
  \end{mathpar}
  Both are required to be simple, and the unit is required to be total:
  \begin{mathpar}
    \includegraphics[height=1cm,align=c]{figs/circ-simple.png}

    \includegraphics[height=1cm,align=c]{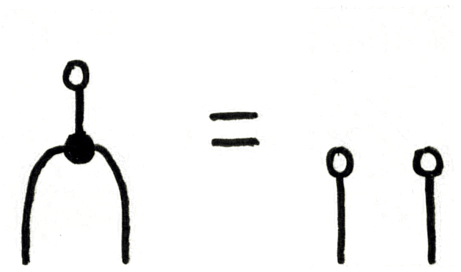}
    
    \includegraphics[height=1cm,align=c]{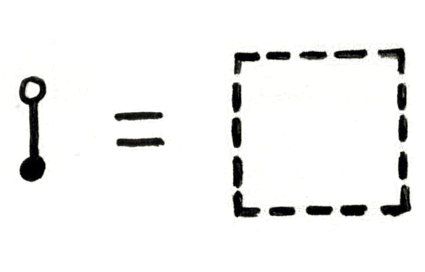}
  \end{mathpar}
  The associativity and unitality axioms become:
  \begin{mathpar}
    \includegraphics[height=1cm,align=c]{figs/associativity-axiom.png}
    
    \includegraphics[height=1cm,align=c]{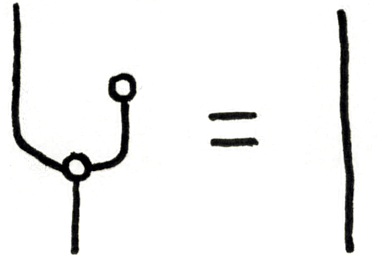}
    
    \includegraphics[height=1cm,align=c]{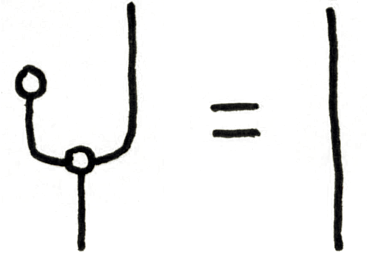}
  \end{mathpar}
  Now, define upside-down versions of the generators as in: 
  \begin{mathpar}
    \includegraphics[height=1cm,align=c]{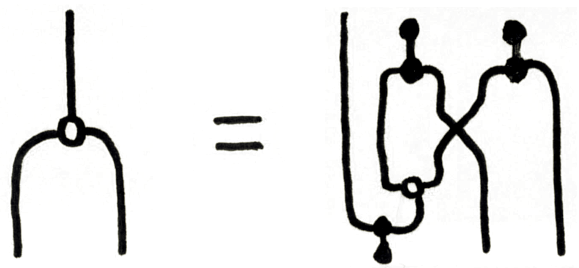}

    \includegraphics[height=1cm,align=c]{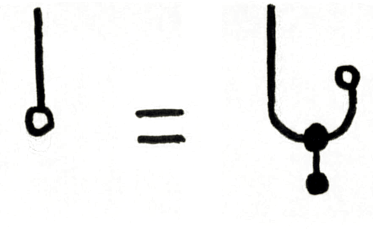}
  \end{mathpar}
  Then left cancellativity, right cancellativity, and conjugation are, respectively:
  \begin{mathpar}
  \includegraphics[height=1cm,align=c]{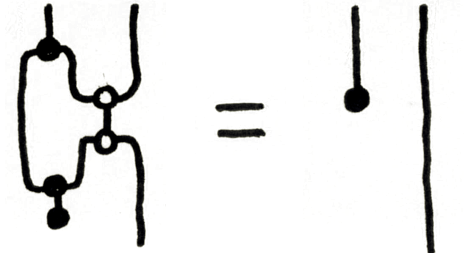}
    
  \includegraphics[height=1cm,align=c]{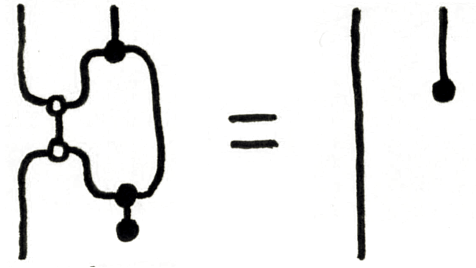}
  
  \includegraphics[height=1cm,align=c]{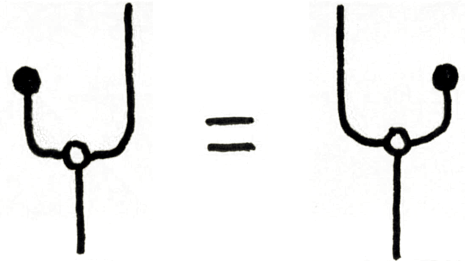}
  \end{mathpar}
  The corresponding category of models is the category of generalized separation algebras and partial monoid homomorphisms.
\end{example}


\begin{example}[Algebraic Theories]
  Let $\X$ be an algebraic theory, and let $(\X_{\mathsf{eq}})_\reglex$ be the regular completion of $\X$ ~\cite{Bunge1995,CarboniVitale1998}. $\Rel((\X_{\mathsf{eq}})_\reglex)$ is a relational algebraic theory. Further, its models and model morphisms (as a relational algebraic theory) coincide with the models and model morphisms of $\X$ (as an algebraic theory). Conversely, if $\X$ is a relational algebraic theory, then the maps of $\X$ form a subcategory $\Map(\X)$. $\Map(\X)$ has finite products, and so defines an algebraic theory in the usual sense. Further, the notions of model and model morphism for relational algebraic theories restrict to the usual notions for algebraic theories on the category of maps. 
\end{example}

\begin{example}[Essentially Algebraic Theories]
  An \emph{essentially algebraic theory} ~\cite{Palmgren2007} is (among many equivalent presentations) a category $\X$ with finite limits. Models are the finite-limit preserving functors $\X \to \SET$, and model morphisms are natural transformations. For $\X$ an essentially algebraic theory let $\X_\reglex$ be the regular completion of $\X$ ~\cite{CarboniVitale1998}. Then $\Rel(\X_\reglex)$ is a relational algebraic theory. Further, its models and model morphisms (as a relational algebraic theory) coincide with the models and model morphisms of $\X$ (as an essentially algebraic theory).
  Conversely, if $\X$ is a relational algebraic theory then the simple maps of $\X$ are a partial algebraic theory in the sense of ~\cite{partial-theories} -- which turn out to be equivalent to essentially algebraic theories. The notions of model and model morphism for relational theories restrict to the corresponding notions for partial theories. 
\end{example}


\section{The Variety Theorem}\label{sec:varietytheorem}

In this section we prove the variety theorem for relational algebraic theories. We do this in phases: first we introduce some necessary terminology concerning classes of idempotents, and recall some details of the idempotent splitting completion. Next, we make the relationship between bicategories of relations and regular categories precise. We then show how the situation extends to include exact categories, this being necessary because exactness is the difference between regular categories and definable categories. Finally, we introduce definable categories, which end up being the varieties of our relational theories. This is structured so that the variety theorem follows immediately. We end by showing precisely when two relational theories present the same definable category.

\subsection{Flavours of Idempotent Splitting}

We begin by introducing some important kinds of arrow in a relational theory:
\begin{definition}
  An arrow $f : A \to A$ of a relational algebraic theory is called reflexive in case $1 \leq f$, \emph{coreflexive} in case $f \leq 1$, a \emph{partial equivalence relation} in case it is symmetric and transitive as in:
    \begin{mathpar}
      \includegraphics[height=1cm,align=c]{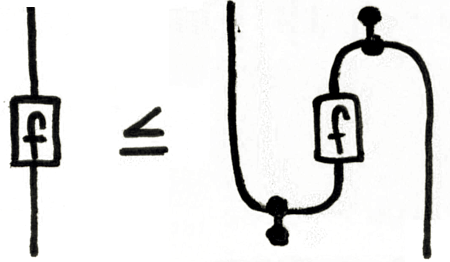}

      \includegraphics[height=1cm,align=c]{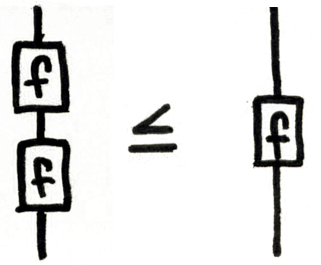}
    \end{mathpar}
  and is called an \emph{equivalence relation} if it is reflexive, symmetric, and transitive. 
\end{definition}
Notice in particular that every partial equivalence relation is idempotent, that  every coreflexive arrow is a partial equivalence relation, and that every equivalence relation is a partial equivalence relation. We also recall the idempotent splitting completion relative to a class of idempotents in a category:
\begin{definition}
  Let $\X$ be a category, and let $\mathcal{E}$ be a collection of idempotents in $\X$. Define a category $\Split_{\mathcal{E}}(\X)$ in which objects are pairs $(X,a)$ where $X$ is a object of $\X$ and $a : X \to X$ is in $\mathcal{E}$, and arrows $f : (X,a) \to (Y,b)$ are arrows $f : X \to Y$ of $\X$ such that $afb = f$. Composition is composition in $\X$, and identities are given by $a = 1_{(X,a)}: (X,a) \to (X,a)$.
\end{definition}
Every member of $\mathcal{E}$ splits in $\Split_\mathcal{E}(\X)$. It turns out that splitting partial equivalence relations works well with cartesian bicategories of relations:
\begin{proposition}[~\cite{Freyd90}]
  If $\X$ is a relational algebraic theory and $\mathcal{E}$ is a class of partial equivalence relations in $\X$, then $\Split_\mathcal{E}(\X)$ is a relational algebraic theory. 
\end{proposition}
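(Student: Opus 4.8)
The plan is to build the structure of a hypergraph category on $\Split_\mathcal{E}(\X)$ by restricting the structure of $\X$ along the splitting idempotents, and then to verify that the result is a cartesian bicategory of relations using the single criterion of Lemma~\ref{lem:easybicatrel}. Throughout I would use that a partial equivalence relation $a$ is symmetric, transitive, and hence idempotent. For the monoidal structure to be well defined we need $\mathcal{E}$ to contain the identities on the monoidal units and to be closed under $\otimes$; since both operations visibly preserve the property of being a partial equivalence relation, I would silently pass to this closure, which is harmless for the statements about models and model morphisms that follow.

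First I would give the symmetric strict monoidal structure: on objects set $(X,a)\otimes(Y,b) = (X\otimes Y, a\otimes b)$ with unit $(I,1_I)$, and let $\otimes$ act on arrows as in $\X$. One checks that $a\otimes b$ is again a partial equivalence relation and that the unit is absorbed strictly, while the symmetry is the restriction $(a\otimes b)\,\sigma_{X,Y}$ of the symmetry of $\X$. Next I would equip each object $(X,a)$ with a commutative special Frobenius algebra by restricting the structure on $X$ along $a$, namely
\[
\delta_{(X,a)} = a\,\delta_X\,(a\otimes a),\quad
\mu_{(X,a)} = (a\otimes a)\,\mu_X\,a,\quad
\varepsilon_{(X,a)} = a\,\varepsilon_X,\quad
\eta_{(X,a)} = \eta_X\,a.
\]
Idempotency of $a$ immediately shows each of these is a legal arrow of $\Split_\mathcal{E}(\X)$, for instance $a\,\delta_{(X,a)}\,(a\otimes a) = \delta_{(X,a)}$. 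The comonoid, monoid, special, and Frobenius axioms would then be reduced to the corresponding axioms in $\X$, and coherence of the Frobenius structure inherited in the same way.

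Finally, $\Split_\mathcal{E}(\X)$ inherits a poset-enrichment by restricting the order of $\X$ to each hom-set, and by Lemma~\ref{lem:meets} it has meets computed as in $\X$ (one checks that $f\cap g$ of two arrows $(X,a)\to(Y,b)$ again satisfies $a\,(f\cap g)\,b = f\cap g$). It then remains to verify the criterion of Lemma~\ref{lem:easybicatrel} for an arbitrary arrow $f:(X,a)\to(Y,b)$; after substituting the restricted structure maps above, this follows from the corresponding identity for $f$ in $\X$ together with repeated use of the equations for $a$ and $b$.

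The hard part will be exactly this last kind of calculation. Because a partial equivalence relation need not be \emph{coreflexive}, I cannot simply slide the idempotents $a$ and $b$ through the Frobenius structure as one could for partial identities; instead each axiom must be re-derived using that $a$ is symmetric and, in the diagrammatic sense, reflexive on its own domain, which is the genuine content behind the assertion. In allegory-theoretic terms this is the statement that splitting symmetric idempotents carries a unitary pre-tabular allegory to a unitary pre-tabular allegory, which is why the result is attributed to~\cite{Freyd90}; an alternative to the direct verification above is to transport the problem across the equivalence between cartesian bicategories of relations and allegories and invoke that fact.
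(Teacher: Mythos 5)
The paper offers no proof of this proposition at all --- it is imported wholesale from ~\cite{Freyd90} --- and your construction (conjugating the Frobenius structure by the splitting idempotents, $\delta_{(X,a)} = a\,\delta_X\,(a\otimes a)$ and so on, closing $\mathcal{E}$ under $\otimes$ so the monoidal product is defined, and checking the cartesian-bicategory axioms via Lemma~\ref{lem:easybicatrel}) is precisely the standard argument from that source on splitting symmetric idempotents in a unitary pre-tabular allegory. The one caveat is that you defer the actual axiom verifications, which is where all the content lives, but you correctly identify both the obstruction (partial equivalence relations are not coreflexive, so they cannot simply be slid through the Frobenius structure) and the tools that make the calculations go through (symmetry and transitivity of $a$, together with lax naturality of the comonoid structure), so the sketch is sound.
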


\subsection{Tabulation and Regular Categories}
We begin our exposition of the correspondence between regular categories and relational algebraic theories by recalling the notion of tabulation ~\cite{CarboniWalters1987}. Intuitively, a tabulation of an arrow represents it as a subobject in the category of maps. 
\begin{definition}
  A \emph{tabulation} of an arrow $f : X \to Y$ in a relational algebraic theory $\X$ consists of a pair of maps $(h,k)$ such that the equation below on the left holds in $\X$, and the map below on the right is monic in $\Map(\X)$:
  \begin{mathpar}
    \includegraphics[height=1cm,align=c]{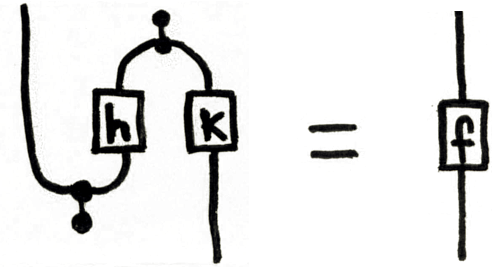}

    \includegraphics[height=1cm,align=c]{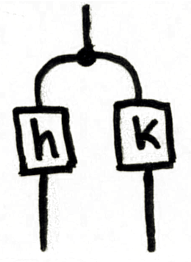}
  \end{mathpar}
  $\X$ is \emph{tabular} in case every arrow of $\X$ admits a tabulation. Further, define $\RAT_\tab$ to be the full 2-subcategory of $\RAT$ (Definition \ref{def:rat}) on the tabular 0-cells. 
\end{definition}
The category of maps of a tabular relational algebraic theory is regular, and conversely the category of internal relations in a regular category is tabular:
\begin{proposition}\label{prop:2fun}
  Let $\REG$ be the 2-category of regular categories, regular functors, and natural transformation. Then:
  \begin{enumerate}[(i)]
    \item If $\X$ is a tabular relational algebraic theory then $\Map(\X)$ is regular. This extends to a 2-functor $\Map : \RAT_{\tab} \to \REG$.
    \item If $\C$ is a regular category, then $\Rel(\C)$ is tabular. This extends to a 2-functor $\Rel : \REG \to \RAT_\tab$.
  \end{enumerate}
\end{proposition}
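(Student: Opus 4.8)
The plan is to obtain the object-level statements from the classical correspondence between unitary tabular allegories and regular categories in \cite{Freyd90}, and to verify 2-functoriality by hand in each direction, the delicate points being the interaction between the order-enrichment and the notion of map.

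I would tackle part (ii) first, as it is the more direct. Given an arrow $r : A \to B$ in $\Rel(\C)$ presented by a jointly monic span $\langle f,g \rangle : R \rightarrowtail A \times B$, I would declare its tabulation to be the pair $(f,g)$, where $f : R \to A$ and $g : R \to B$ are viewed as arrows of $\C \simeq \Map(\Rel(\C))$. The tabulation equation then asserts that $r$ is recovered as the relational composite of the converse of $f$ with $g$, which holds by construction since a jointly monic span equals its own image; and the monicity requirement on $\langle f,g \rangle$ in $\Map(\X)$ is exactly the joint monicity of the span. Hence every arrow admits a tabulation and $\Rel(\C)$ is tabular. To extend $\Rel$ to a 2-functor I would observe that a regular functor $\C \to \D$ preserves finite limits, monomorphisms, and image factorizations, hence sends jointly monic spans to jointly monic spans and commutes with relational composition; the induced functor $\Rel(\C) \to \Rel(\D)$ preserves the frobenius structure by inspection, so is a morphism landing in $\RAT_\tab$. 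A natural transformation $\theta : S \to T$ between regular functors has map components, and I would check that its naturality squares, strict for the maps of $\C$, relax to the inequality $\Rel(S)(r)\,\theta_B \leq \theta_A\,\Rel(T)(r)$ for an arbitrary relation $r$, yielding a lax transformation.

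For part (i) the core claim that $\Map(\X)$ is regular I would import from \cite{Freyd90}: since cartesian bicategories of relations coincide with unitary pre-tabular allegories, the tabular ones are unitary tabular allegories, whose categories of maps are regular. Concretely this amounts to checking that $I$ is terminal, that $\otimes$ furnishes binary products via the comonoid structure, that pullbacks arise as tabulations of suitable composite relations, and that the regular epi--mono factorization of a map is induced by the relational image, with stability under pullback a standard allegory computation. For 2-functoriality, a morphism $F : \X \to \Y$ preserves the frobenius structure and hence simplicity and totality, so it restricts to $\Map(F) : \Map(\X) \to \Map(\Y)$; because $F$ preserves meets (Lemma \ref{lem:meets}) it preserves the order, tabulations, and image factorizations, so $\Map(F)$ is regular. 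A lax transformation $\alpha : F \to G$ has map components by Lemma \ref{lem:totalcomponents}, and restricting its lax naturality inequality to maps upgrades it to a genuine naturality square, producing the natural transformation $\Map(\alpha)$.

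The main obstacle I anticipate lies precisely in these order-theoretic upgrades rather than in the construction of limits or factorizations. Verifying that the lax naturality inequality becomes an equality when restricted to maps in part (i)—and dually that strict naturality in $\C$ relaxes to the correct inequality on general relations in part (ii)—requires careful use of the simple and total equations together with the behaviour of converses. This is where the genuine content of the 2-functoriality resides, the remaining verifications being largely mechanical or citable.
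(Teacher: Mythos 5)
Your proposal is correct and matches the paper's approach: the paper likewise imports the object- and 1-cell-level correspondence from Carboni--Walters and Freyd--Scedrov (see the footnote to Theorem \ref{thm:tabregequiv}) and treats the extension to 2-cells---via Lemma \ref{lem:totalcomponents} and the upgrade of lax naturality to strict naturality on maps---as the new content. Your identification of the order-theoretic upgrades as the genuinely delicate step is exactly where the paper locates its own contribution.
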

  
Tabular relational theories and regular categories are thus interchangeable:\footnote{We note that we restrict our attention to the 0- and 1-cells then this is proven in ~\cite{CarboniWalters1987}. Our contribution is to extend this to include 2-cells.}
\begin{theorem}\label{thm:tabregequiv}
  There is an equivalence of 2-categories $\Map : \RAT_\tab \simeq \REG : \Rel$.
\end{theorem}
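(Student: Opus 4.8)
The plan is to promote the two 2-functors of Proposition \ref{prop:2fun} to an equivalence of 2-categories using the standard criterion for a biequivalence: a 2-functor is a biequivalence precisely when it is essentially surjective on 0-cells and a local equivalence, i.e. induces an equivalence on each hom-category. I would verify these two conditions for $\Map : \RAT_\tab \to \REG$, with $\Rel$ serving as the pseudo-inverse. Everything rests on tabulation, which lets me translate back and forth between arrows of a tabular theory $\X$ and internal relations in $\Map(\X)$, and on the cited results: the Freyd equivalence $\C \simeq \Map(\Rel(\C))$, Lemma \ref{lem:meets}, and Lemma \ref{lem:totalcomponents}.

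For essential surjectivity on 0-cells, given a regular category $\C$ I would take $\X = \Rel(\C)$, which is tabular by Proposition \ref{prop:2fun}(ii) and satisfies $\Map(\X) \simeq \C$ by the Freyd equivalence. Dually, for a tabular theory $\X$ I would check that $\Rel(\Map(\X)) \simeq \X$, witnessing that $\Rel$ inverts $\Map$ on objects. The content here is the tabulation correspondence: on an arrow $f : A \to B$ of $\X$ I tabulate $f$ to a jointly monic span of maps $(h,k)$, which is exactly a subobject $\langle h,k\rangle \rightarrowtail A \times B$ in $\Map(\X)$, hence an arrow of $\Rel(\Map(\X))$; conversely every such span is the tabulation of a unique arrow of $\X$ recovered through the tabulation equation. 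Since tabulations are unique up to a unique isomorphism of apices, this assignment is a well-defined bijection on each hom-poset. I would then verify it preserves all the relevant structure: that the pullback-then-image composition of $\Rel(\Map(\X))$ matches composition in $\X$, that it is strict monoidal and preserves the frobenius algebras, and that it is an order-isomorphism on homs, translating $f \leq g$ into containment of the tabulating subobjects via the meet formula of Lemma \ref{lem:meets}.

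For the local equivalence I must handle 1-cells and 2-cells. At the level of 1-cells, a regular functor $P : \Map(\X) \to \Map(\Y)$ lifts to a relational morphism by sending an arrow of $\X$ with tabulation $(h,k)$ to the arrow of $\Y$ tabulated by $(Ph,Pk)$, and $\Map$ of this lift is $P$; conversely $\Map$ is faithful on 1-cells because any morphism is determined by its action on maps through tabulation. At the level of 2-cells I would use Lemma \ref{lem:totalcomponents}: the components of a lax transformation $F \Rightarrow G$ are maps, so it restricts to a natural transformation $\Map(F) \Rightarrow \Map(G)$, and conversely a natural transformation of the map-parts automatically satisfies the lax-naturality inequality on every arrow of $\X$, since each such arrow tabulates into maps and the inequality reduces to the naturality squares on those maps. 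Naturality (pseudonaturality) of the comparison equivalences in regular functors and in morphisms of $\RAT_\tab$ follows from the fact that such morphisms preserve maps, monicity, and the frobenius structure, hence send tabulations to tabulations.

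The step I expect to be the main obstacle is the tabulation correspondence underlying the object-level inverse $\Rel(\Map(\X)) \simeq \X$ — specifically, confirming that the pullback-then-image composition of internal relations in $\Map(\X)$ corresponds under tabulation to ordinary composition in $\X$, together with compatibility of tabulation with the frobenius structure and with the poset-enrichment. The remaining verifications either reduce to the Freyd equivalence or amount to bookkeeping with Lemma \ref{lem:totalcomponents} and the definition of tabulation.
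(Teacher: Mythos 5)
Your proposal is correct and follows essentially the same route as the paper: the paper defers the 0- and 1-cell equivalence (including the tabulation correspondence $\Rel(\Map(\X)) \simeq \X$ and the composition-matching that you flag as the main obstacle) to the cited result of Carboni and Walters, and its own contribution is precisely your 2-cell step via Lemma \ref{lem:totalcomponents}. The one point worth making explicit in that step is that lax naturality on maps upgrades to strict naturality because comparable parallel maps in a cartesian bicategory of relations are equal.
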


Finally, any relational theory can be made tabular by splitting the coreflexives:
\begin{proposition}\label{thm:tabularcompletion}
  Let $\X$ be a relational algebraic theory, and let $\cor$ be the collection of coreflexives in $\X$. Then $\X$ is tabular if and only if every member of $\cor$ splits. In particular, $\Split_\cor(\X)$ is always tabular. This extends to a 2-adjunction $\Split_\cor : \RAT \dashv \RAT_\tab : U$ where $U$ is the evident forgetful functor.
\end{proposition}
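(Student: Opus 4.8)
The plan is to reduce tabularity to the splitting of coreflexives, establish the stated equivalence, and then read off the ``in particular'' clause and the $2$-adjunction as consequences. Throughout I will use two facts. First, a morphism of cartesian bicategories of relations preserves the meet of Lemma~\ref{lem:meets}, hence preserves the order, and therefore sends coreflexives to coreflexives. Second, coreflexives are partial equivalence relations, so the earlier Proposition of~\cite{Freyd90} already guarantees $\Split_\cor(\X) \in \RAT$.

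For the equivalence ``$\X$ tabular $\iff$ every coreflexive splits'', the forward direction is immediate: the two legs of a tabulation $(h,k)$ of a coreflexive $e : A \to A$ coincide as subobjects once $e \leq 1_A$, yielding a single monic map $m$ whose composite with its converse is $1$ and whose converse composite is $e$, i.e.\ a splitting of $e$. The substantive direction is the converse. Given an arbitrary arrow $f : X \to Y$, I would associate to it the coreflexive $e_f \leq 1_{X \otimes Y}$ whose ``elements'' are exactly the pairs related by $f$, built by bending $f$ into an endorelation on $X \otimes Y$ with the Frobenius cups and caps and meeting the result with $1_{X \otimes Y}$ via Lemma~\ref{lem:meets}. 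Splitting $e_f$ yields a monic map $m : R \rightarrowtail X \otimes Y$, and I would set $h := m\,(1_X \otimes \varepsilon_Y)$ and $k := m\,(\varepsilon_X \otimes 1_Y)$, that is, $m$ postcomposed with the two projections. A diagrammatic calculation using the axioms of Definition~\ref{def:cartbicatrel} then shows that the converse of $h$ composed with $k$ recovers $f$ (because $m$ splits precisely $e_f$) and that $(h,k)$ is jointly monic (because $m$ is monic and the projections jointly reflect points). I expect this verification to be the \textbf{main obstacle}: it is the one genuinely string-diagrammatic step, and getting the bookkeeping of cups, caps, and the meet correct is where all the cartesian-bicategory structure is actually consumed.

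The ``in particular'' clause then follows with little extra work. A coreflexive of $\Split_\cor(\X)$ on an object $(A,e)$ is an arrow $g : A \to A$ of $\X$ with $ege = g$ and $g \leq 1_{(A,e)} = e$; hence $g \leq 1_A$, so $g$ is already a coreflexive of $\X$ and $(A,g)$ is an object of $\Split_\cor(\X)$. Using that coreflexives below $e$ satisfy $eg = g = ge$, the arrows $g : (A,e) \to (A,g)$ and $g : (A,g) \to (A,e)$ split $g$ inside $\Split_\cor(\X)$. Thus every coreflexive of $\Split_\cor(\X)$ splits, and by the equivalence just proved $\Split_\cor(\X)$ is tabular.

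Finally, for the $2$-adjunction I would exhibit $\RAT_\tab$ as a $2$-reflective sub-$2$-category of $\RAT$ with reflector $\Split_\cor$. The unit is the canonical embedding $\eta_\X : \X \to \Split_\cor(\X)$, $A \mapsto (A,1_A)$, which is a morphism in $\RAT$ landing in a tabular theory by the previous paragraph. For the universal property, given tabular $\Y$ and $F : \X \to \Y$ in $\RAT$, I would define $\bar F : \Split_\cor(\X) \to \Y$ by sending $(A,e)$ to a chosen splitting of the coreflexive $F(e)$ in $\Y$ (which exists since $F$ preserves coreflexives and $\Y$ is tabular), and sending an arrow $f : (A,e) \to (A',e')$ to the induced arrow between splittings; preservation of the monoidal and Frobenius structure is inherited from $F$ together with the functoriality of splitting. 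Uniqueness of $\bar F$ up to canonical isomorphism follows from the uniqueness of splittings, and the assignment extends to lax transformations by the same functoriality, giving an equivalence of hom-categories $\RAT_\tab(\Split_\cor(\X),\Y) \simeq \RAT(\X,\Y)$ that is $2$-natural in $\X$ and $\Y$. Since $U$ is the inclusion of a full sub-$2$-category, the counit $\Split_\cor(U\C) \to \C$ is an equivalence for tabular $\C$, confirming that the adjunction is a reflection. This last part is essentially formal once the equivalence of the second paragraph is in hand; the only care needed is the coherent handling of the chosen splittings and of $2$-cells.
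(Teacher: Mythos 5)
Your proposal is correct and follows essentially the route the paper relies on (the paper defers this proposition to the allegory literature: the equivalence ``tabular $\iff$ coreflexives split'' is the standard pre-tabular-allegory argument of Freyd--Scedrov, with tabulations of $f$ obtained by splitting the coreflexive on $X \otimes Y$ carved out of the tabulation of the top relation, and the $2$-adjunction is the usual reflection onto the full sub-$2$-category). The one step you rightly flag as the main obstacle --- the diagrammatic verification that $(h,k)$ tabulates $f$ --- is exactly where the work lies, and your outline of it is the correct one.
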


\subsection{Effectivity and Exact Categories}

We begin by recalling the closely related notions of effectivity and exactness:
\begin{definition}[~\cite{Freyd90}] 
  A relational algebraic theory $\X$ is \emph{effective} in case all partial equivalence relations in $\X$ split. Let $\RAT_\eff$ be the full 2-subcategory of $\RAT$ on the effective 0-cells.
\end{definition}

\begin{definition}[~\cite{CarboniVitale1998}]
  A regular category $\C$ is \emph{exact} in case $\mathsf{Rel(\C)}$ is effective. Let $\EX$ be the full 2-subcategory of $\REG$ on the exact 0-cells. 
\end{definition}

It is straightforward to verify that Theorem \ref{thm:tabregequiv} restricts to the effective case:
\begin{proposition}
  If $\X$ is an effective relational algebraic theory, then $\Map(\X)$ is exact. Conversely, if $\C$ is an exact category, then $\Rel(\C)$ is effective. This extends to an equivalence of 2-categories $\Map : \RAT_\eff \simeq \EX : \Rel$.
\end{proposition}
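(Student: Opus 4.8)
The plan is to obtain this result as a restriction of the equivalence $\Map : \RAT_\tab \simeq \REG : \Rel$ of Theorem \ref{thm:tabregequiv}, exploiting the fact that both $\RAT_\eff$ and $\EX$ are defined as \emph{full} $2$-subcategories. The first thing I would establish is that $\RAT_\eff$ sits inside $\RAT_\tab$: an effective theory has all partial equivalence relations split, and since every coreflexive is a partial equivalence relation, in particular every coreflexive splits, so $\X$ is tabular by Proposition \ref{thm:tabularcompletion}. Thus the restrictions of $\Map$ and $\Rel$ to the effective and exact sub-$2$-categories are well-typed at the ambient tabular/regular level, and it remains only to check that they match up objects correctly.

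Next I would verify the two object-level claims. The converse direction is immediate by definition: a regular category $\C$ is exact precisely when $\Rel(\C)$ is effective, so $\Rel$ sends $\EX$ into $\RAT_\eff$ by fiat. For the forward direction, suppose $\X$ is effective; I want $\Map(\X)$ exact, that is, $\Rel(\Map(\X))$ effective. Because $\X$ is tabular, the unit of the equivalence of Theorem \ref{thm:tabregequiv} is an equivalence $\X \simeq \Rel(\Map(\X))$ in $\RAT$, so it suffices to know that effectiveness is invariant under equivalence in $\RAT$.

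That invariance is the one genuinely load-bearing step, and I would argue it as follows. A morphism of cartesian bicategories of relations preserves the frobenius structure, hence preserves meets (which are frobenius-definable by Lemma \ref{lem:meets}) and therefore the order $\leq$ (determined by meets, again by Lemma \ref{lem:meets}); consequently it sends partial equivalence relations to partial equivalence relations, since symmetry and transitivity are expressed purely in terms of the frobenius structure and $\leq$. If the morphism $E$ is moreover an equivalence, then it is fully faithful, so it is injective on hom-sets; combined with meet-preservation this forces $f \leq g \iff Ef \leq Eg$, so $E$ also \emph{reflects} partial equivalence relations. Finally, an equivalence of categories both preserves and reflects the existence of splittings of idempotents. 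Hence $\X$ is effective if and only if $\Rel(\Map(\X))$ is, which gives exactness of $\Map(\X)$.

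Having shown that $\Map$ restricts to a $2$-functor $\RAT_\eff \to \EX$ and $\Rel$ to a $2$-functor $\EX \to \RAT_\eff$, I would assemble the $2$-equivalence by restricting the data of Theorem \ref{thm:tabregequiv}: the components of the unit and counit at effective (resp.\ exact) objects are $1$-cells between such objects, and these are all available because the subcategories are full; the invertible modifications witnessing the $2$-equivalence restrict for the same reason. The main obstacle is thus concentrated entirely in the invariance argument above --- in particular in confirming that a frobenius-preserving equivalence transports partial equivalence relations and their splittings faithfully in both directions --- while everything else is routine bookkeeping about restricting a known $2$-equivalence to full sub-$2$-categories.
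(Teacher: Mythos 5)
Your proposal is correct and takes essentially the same route as the paper, which simply asserts that Theorem \ref{thm:tabregequiv} restricts to the effective/exact case without further detail. Your elaboration --- effective implies tabular via coreflexives being partial equivalence relations, $\Rel$ landing in $\RAT_\eff$ by the definition of exactness, and $\Map(\X)$ being exact because a frobenius-preserving equivalence preserves and reflects partial equivalence relations and their splittings --- is a sound filling-in of exactly that restriction argument.
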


Splitting equivalence relations makes tabular relational theories effective:
\begin{proposition}\label{thm:effectivecompletion}
  Let $\X$ be a tabular relational algebraic theory, and let $\eq$ be the collection of equivalence relations in $\X$. Then $\Split_\eq(\X)$ is effective. This extends to a 2-adjunction $\Split_\eq \RAT_\tab \dashv \RAT_\eff : U$ where $U$ is the evident forgetful functor.
\end{proposition}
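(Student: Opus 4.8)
The plan is to reduce both claims to the corresponding facts about the exact completion of a regular category, transporting everything across the equivalence $\Map : \RAT_\tab \simeq \REG : \Rel$ of Theorem~\ref{thm:tabregequiv} and its restriction $\Map : \RAT_\eff \simeq \EX : \Rel$ established just above. Write $\C = \Map(\X)$, which is regular since $\X$ is tabular (Proposition~\ref{prop:2fun}), and recall $\X \simeq \Rel(\C)$. Under this equivalence the equivalence relations of $\X$ correspond exactly to the internal equivalence relations of $\C$, so splitting the former ought to coincide with freely adjoining quotients of the latter, i.e.\ with the exact completion $\C_\exreg$ of~\cite{CarboniVitale1998}. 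Concretely, I would establish an equivalence $\Split_\eq(\X) \simeq \Rel(\C_\exreg)$ of cartesian bicategories of relations by matching the Carboni--Vitale description of $\C_\exreg$ (objects are equivalence relations in $\C$, arrows are functional relations compatible with them) against the description of $\Split_\eq(\Rel(\C))$ (objects are pairs $(X,e)$ with $e$ an equivalence relation, arrows $r$ with $ere = r$); these descriptions carry visibly the same data. Since $\C_\exreg$ is exact, $\Rel(\C_\exreg)$ is effective by the preceding proposition, and hence so is $\Split_\eq(\X)$.

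Before transporting the adjunction I would record that $U$ really is an inclusion: if $\Y$ is effective then in particular every coreflexive, being a partial equivalence relation, splits, so $\Y$ is tabular by Proposition~\ref{thm:tabularcompletion}. Thus $\RAT_\eff$ is a full $2$-subcategory of $\RAT_\tab$ and $U$ is the evident inclusion, which under the equivalences above matches the inclusion $V : \EX \hookrightarrow \REG$.

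For the $2$-adjunction I would invoke the fact~\cite{CarboniVitale1998} that the exact completion is left $2$-adjoint to this inclusion, $(-)_\exreg : \REG \dashv \EX : V$, and transport it along the two equivalences. Explicitly, $\Rel \circ (-)_\exreg \circ \Map$ is left $2$-adjoint to $\Rel \circ V \circ \Map$; the latter is $2$-naturally equivalent to $U$ (for effective $\Y$ we have $\Rel(V(\Map(\Y))) = \Rel(\Map(\Y)) \simeq \Y$ since $\Y$ is tabular), while the former is $2$-naturally equivalent to $\Split_\eq$ by the identification of the first paragraph. Since adjunctions transport across equivalences of $2$-categories, this yields the desired $2$-adjunction $\Split_\eq : \RAT_\tab \dashv \RAT_\eff : U$, exactly paralleling the tabular case of Proposition~\ref{thm:tabularcompletion}.

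The main obstacle is the content of the first paragraph, namely the equivalence $\Split_\eq(\X) \simeq \Rel(\C_\exreg)$, or equivalently a direct verification that $\Split_\eq(\X)$ is effective. A direct argument proceeds by taking a partial equivalence relation $p$ of $\Split_\eq(\X)$, observing that it is also a partial equivalence relation of $\X$ satisfying $epe = p$, splitting its domain coreflexive in $\X$ (available since $\X$ is tabular) to obtain an object $D$ on which $p$ restricts to a genuine equivalence relation $p'$, and then splitting $p'$ in $\Split_\eq(\X)$ via the object $(D,p')$. The bookkeeping --- checking that the connecting arrows satisfy the compatibility conditions of $\Split_\eq(\X)$ and that they genuinely split $p$ as an endo-arrow of $(X,e)$ --- is where essentially all the work lies; everything else is formal transport across the equivalences already in hand.
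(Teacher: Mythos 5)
Your argument is sound, but it runs in the opposite direction to the paper's. The paper states this proposition without an explicit proof, following the pattern of Propositions \ref{thm:tabularcompletion} and \ref{prop:splittingthing}: the splitting adjunctions are established intrinsically in $\RAT$ (a direct verification of the universal property of $\Split_\eq$, with the effectivity of the result resting on the allegory-theoretic facts of \cite{Freyd90}), and only \emph{afterwards} is the exact completion of a regular category presented as $\C_\exreg = \Map(\Split_\eq(\Rel(\C)))$ --- note the ``We may therefore give the exact completion\dots'' that follows this proposition. You instead take the Carboni--Vitale construction of $(-)_\exreg$ and its 2-adjunction $\REG \dashv \EX$ as given and transport it across the equivalences of Theorem \ref{thm:tabregequiv}; this is legitimate (their construction is independent of the relational formalism) and buys you the 2-adjunction essentially for free once $\Split_\eq(\X) \simeq \Rel(\C_\exreg)$ is in hand, but within the paper's own logical ordering it would be circular, since the paper derives that 2-adjunction \emph{from} this proposition. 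Two further points. First, the descriptions you match do not ``carry visibly the same data'': the Carboni--Vitale $\C_\exreg$ has only \emph{functional} relations as arrows, whereas $\Split_\eq(\Rel(\C))$ has all compatible relations, so the honest comparison is $\C_\exreg \simeq \Map(\Split_\eq(\Rel(\C)))$, and upgrading this to $\Rel(\C_\exreg) \simeq \Split_\eq(\Rel(\C))$ additionally requires showing that $\Split_\eq(\Rel(\C))$ is tabular. Second, your closing direct argument --- given a partial equivalence relation $p$ on $(X,e)$, split the domain coreflexive $p \cap 1$ in $\X$ and then split the induced equivalence relation on the resulting object --- is exactly the mechanism behind $\Split_\per(\X) \simeq \Split_\eq(\Split_\cor(\X))$ from \cite[2.169]{Freyd90}, which the paper invokes in Proposition \ref{prop:splittingthing}; since $\X$ is tabular its coreflexives already split, so $\Split_\eq(\X) \simeq \Split_\per(\X)$ is effective, and this is the cleaner primary route, in the spirit of the paper. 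Your observation that effectivity implies tabularity (coreflexives are partial equivalence relations) is correct and is indeed needed for $U$ to land in $\RAT_\tab$.
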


We may therefore give the exact completion of a regular category as follows:
\begin{proposition}[~\cite{CarboniVitale1998,LackTendas20}]
  If $\C$ is regular, define the \emph{exact completion} of $\C$ by
  \[ \C_{\exreg} = \Map(\Split_{\eq}(\Rel(\X))) \]
  Then $\C_\exreg$ is exact. This extends to a 2-adjunction $\exreg : \REG \dashv \EX : U$ where $U$ is the evident forgetful functor.
\end{proposition}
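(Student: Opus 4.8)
The plan is to build the exact completion directly from the tools already assembled in this subsection, rather than constructing it from scratch. The crucial observation is that the equivalence $\Map : \RAT_\tab \simeq \EX : \Rel$ restricts to exactness (the preceding proposition), and that splitting equivalence relations carries a tabular theory to an effective one (Proposition \ref{thm:effectivecompletion}). Composing these, for regular $\C$ I would first form $\Rel(\C)$, which is tabular by Proposition \ref{prop:2fun}(ii); then $\Split_{\eq}(\Rel(\C))$ is effective by Proposition \ref{thm:effectivecompletion}; then $\Map(\Split_{\eq}(\Rel(\C)))$ is exact by the preceding proposition. So the definition $\C_\exreg = \Map(\Split_{\eq}(\Rel(\C)))$ produces an exact category essentially by construction, and the first claim requires only chaining these three facts. (I note the excerpt writes $\Rel(\X)$ where it evidently means $\Rel(\C)$.)

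For the 2-adjunction $\exreg : \REG \dashv \EX : U$, the strategy is to transport the 2-adjunction $\Split_\eq : \RAT_\tab \dashv \RAT_\eff : U$ of Proposition \ref{thm:effectivecompletion} across the two equivalences of 2-categories. Explicitly, I would paste together the equivalence $\Rel : \REG \simeq \RAT_\tab : \Map$ (Theorem \ref{thm:tabregequiv}) on the left, the adjunction $\Split_\eq \dashv U$ in the middle, and the equivalence $\Map : \RAT_\eff \simeq \EX : \Rel$ on the right. The composite left 2-functor $\REG \xrightarrow{\Rel} \RAT_\tab \xrightarrow{\Split_\eq} \RAT_\eff \xrightarrow{\Map} \EX$ is exactly $\exreg$ as defined, and its right adjoint is the forgetful functor $U : \EX \to \REG$ (an exact category is in particular regular, with regular functors between them being exact functors, so $U$ really is the evident inclusion). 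Since equivalences of 2-categories are in particular 2-adjoints with invertible unit and counit, composing a 2-adjunction on either side by an equivalence again yields a 2-adjunction, with unit and counit obtained by whiskering; this is a formal manipulation in the 2-category of 2-categories.

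The main obstacle I anticipate is \emph{bookkeeping at the level of 2-cells} rather than any deep difficulty. Composing adjunctions with equivalences is routine for 1-categories, but here everything is 2-categorical: I must check that the triangle identities hold up to the appropriate coherence, that whiskering the units and counits by the equivalence 2-functors yields a genuine 2-adjunction (not merely a biadjunction), and that the strictness assumptions built into $\RAT$ (strict monoidal functors, strict preservation of frobenius structure) are respected throughout. A secondary subtlety is confirming that the forgetful $U : \EX \to \REG$ really is the right adjoint obtained by transport, i.~e.\ that transporting the middle $U : \RAT_\eff \to \RAT_\tab$ across the equivalences reproduces the naive inclusion of exact categories into regular categories; this amounts to checking that the square relating $\Map$, $\Rel$, and the two forgetful functors commutes up to coherent isomorphism, which follows from the fullness of $\EX$ and $\RAT_\eff$ as sub-2-categories together with the explicit description of the equivalences. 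Once these coherence checks are discharged, the statement follows, so I would present the argument as transport of structure and relegate the triangle-identity verification to a remark.
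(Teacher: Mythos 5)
Your proposal is correct and follows exactly the route the paper intends: the proposition is stated with citations rather than a written-out proof, but its placement after Proposition \ref{thm:effectivecompletion} and the effective/exact equivalence, together with Corollary \ref{cor:bigsquare}, shows that the intended argument is precisely your chain ($\Rel(\C)$ tabular, $\Split_\eq$ makes it effective, $\Map$ of an effective theory is exact) plus transport of the 2-adjunction $\Split_\eq \dashv U$ across the two 2-equivalences. You are also right that $\Rel(\X)$ in the statement is a typo for $\Rel(\C)$.
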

We summarize the relationship of regularity and exactness to relational theories:
\begin{corollary}\label{cor:bigsquare}
  The following diagram of left 2-adjoint commutes: 
  \[
  \begin{tikzcd}
    \RAT_\tab \ar[r,"\Map","\sim"'] \ar[d,"\Split_\eq"'] & \REG \ar[d,"\exreg"] \\
    \RAT_\eff \ar[r,"\Map"',"\sim"] & \EX
  \end{tikzcd}
  \]
  where the arrows marked with $\sim$ are part of a 2-equivalence. 
\end{corollary}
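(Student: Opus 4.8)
The plan is to read off commutativity directly from the way the exact completion 2-functor was defined. Recall that $\exreg : \REG \to \EX$ is literally the composite $\Map \circ \Split_\eq \circ \Rel$, so that for a regular category $\C$ we have $\C_\exreg = \Map(\Split_\eq(\Rel(\C)))$. Thus chasing a tabular relational theory $\X$ clockwise around the square yields $\exreg(\Map(\X)) = \Map(\Split_\eq(\Rel(\Map(\X))))$, while chasing it counterclockwise yields $\Map(\Split_\eq(\X))$. Commutativity therefore amounts to a comparison of these two expressions, and the only discrepancy between them is the occurrence of $\Rel(\Map(\X))$ in place of $\X$.

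This discrepancy is resolved by the 2-equivalence of Theorem \ref{thm:tabregequiv}. Since $\Map : \RAT_\tab \simeq \REG : \Rel$ is a 2-equivalence, its unit exhibits a 2-natural equivalence $\eta : \mathrm{Id}_{\RAT_\tab} \simeq \Rel \circ \Map$. Whiskering $\eta$ on the left first by the 2-functor $\Split_\eq$ and then by $\Map : \RAT_\eff \to \EX$ produces a 2-natural equivalence $\Map \circ \Split_\eq \simeq \Map \circ \Split_\eq \circ \Rel \circ \Map = \exreg \circ \Map$, which is precisely the sought coherence 2-cell filling the square. That each edge is a left 2-adjoint is already recorded: the horizontal edges are halves of the 2-equivalences $\Map : \RAT_\tab \simeq \REG$ and $\Map : \RAT_\eff \simeq \EX$ (hence both left and right 2-adjoints), $\Split_\eq$ is a left 2-adjoint by Proposition \ref{thm:effectivecompletion}, and $\exreg$ is a left 2-adjoint by the exact completion proposition.

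I expect the only real work to be coherence bookkeeping rather than any substantive calculation. One must check that the comparison equivalence is 2-natural in $\X$ (on 1-cells and on 2-cells alike), which follows formally from the 2-functoriality of $\Map$ and $\Split_\eq$ together with the 2-naturality of $\eta$; and one must invoke the fact that 2-functors preserve equivalences, so that whiskering $\eta$ indeed yields an equivalence and not merely a pointwise one. As a cross-check I would verify the dual square of right 2-adjoints, which commutes strictly: both $U \circ \Rel$ and $\Rel \circ U$ send an exact category $\D$ to the tabular relational theory $\Rel(\D)$, since the two forgetful functors merely forget effectivity and exactness in opposite orders. Uniqueness of left 2-adjoints up to 2-natural isomorphism then recovers commutativity of the original square. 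The main thing to guard against is conflating strict with pseudo commutativity; the correct statement is commutativity up to the coherent 2-natural equivalence constructed above.
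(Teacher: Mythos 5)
Your proposal is correct and follows exactly the route the paper intends: the corollary is stated without proof precisely because, with $\C_\exreg$ defined as $\Map(\Split_\eq(\Rel(\C)))$, commutativity reduces to cancelling $\Rel \circ \Map \simeq \mathrm{Id}_{\RAT_\tab}$ via the unit of the 2-equivalence of Theorem \ref{thm:tabregequiv}, and the adjointness of each edge is supplied by the surrounding propositions. Your additional cross-check via the strictly commuting square of right 2-adjoints is a nice touch but not needed.
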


Similarly, splitting partial equivalence relations allows us to summarize the role of the idempotent splitting completion:

\begin{proposition}\label{prop:splittingthing}
  Write $\per$ to denote the collection of partial equivalence relations in a relational algebraic theory. There is a 2-adjunction $\Split_\per : \RAT \dashv \RAT_\eff : U$ where $U$ is the evident forgetful functor.
  Further, for any relational algebraic theory $\X$, we have $\Split_\per(\X) \simeq \Split_\eq(\Split_\cor(\X))$, and so the following diagram of left 2-adjoints commutes:
    \[\begin{tikzcd}
    \RAT \ar[r,"\Split_\cor"] \ar[rd,"\Split_\per"'] & \RAT_\tab \ar[d,"\Split_\eq"] \\ & \RAT_\eff
    \end{tikzcd}\]
\end{proposition}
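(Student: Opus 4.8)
The plan is to prove Proposition~\ref{prop:splittingthing} in two movements: first establish the 2-adjunction $\Split_\per : \RAT \dashv \RAT_\eff : U$, and then establish the equivalence $\Split_\per(\X) \simeq \Split_\eq(\Split_\cor(\X))$, from which the commuting triangle of left 2-adjoints follows by composing the two adjunctions already in hand (Proposition~\ref{thm:tabularcompletion} and Proposition~\ref{thm:effectivecompletion}) and invoking essential uniqueness of left adjoints.

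For the adjunction itself, I would first check that $\Split_\per(\X)$ really lands in $\RAT_\eff$. By the Proposition of Freyd cited earlier, splitting a class of partial equivalence relations in a relational algebraic theory yields a relational algebraic theory, so $\Split_\per(\X)$ is a 0-cell of $\RAT$; I then need effectivity, i.e.\ that every partial equivalence relation in $\Split_\per(\X)$ splits. The key point is idempotent-completeness-style closure: a partial equivalence relation on an object $(X,a)$ of $\Split_\per(\X)$ unpacks to a partial equivalence relation on $X$ in $\X$ below (compatible with) $a$, and this is already a member of $\per$, hence splits in the completion. I would verify the standard fact that splitting an already-split-friendly class leaves nothing further to split, so $\Split_\per(\X)$ is effective. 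The unit $\X \to U\Split_\per(\X)$ sends $X$ to $(X,1_X)$, and the universal property follows the usual template for idempotent-splitting completions: any morphism $\X \to U\Y$ with $\Y$ effective extends essentially uniquely along the unit because in $\Y$ every partial equivalence relation (in particular the images of those in $\X$) splits, giving the required lift on objects $(X,a)$ via the splitting of the image of $a$. I expect this universal-property verification, carried out at the level of a 2-adjunction (so requiring coherence of the bijection on 2-cells, i.e.\ lax transformations), to be the main obstacle, since one must confirm that the extension is compatible with the frobenius structure and that lax transformations transport correctly under splitting.

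For the equivalence $\Split_\per(\X) \simeq \Split_\eq(\Split_\cor(\X))$, the guiding observation is that every partial equivalence relation factors through its splitting into a coreflexive part and an equivalence relation: splitting $\cor$ first tabulates the theory (Proposition~\ref{thm:tabularcompletion}), and in the tabular setting the remaining partial equivalence relations to be split behave like genuine equivalence relations on subobjects, which is exactly what $\Split_\eq$ handles (Proposition~\ref{thm:effectivecompletion}). Concretely, I would exhibit a functor comparing the two completions sending an object $(X,a)$ of $\Split_\per(\X)$ to the iterated splitting data, using that a partial equivalence relation $a$ determines a coreflexive domain together with an equivalence relation on the subobject it cuts out. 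The two-step splitting $\Split_\eq \circ \Split_\cor$ reconstructs exactly this data, and one checks the comparison is essentially surjective and fully faithful.

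Finally, the commuting triangle is formal: $\Split_\per$, $\Split_\eq \circ \Split_\cor$ are both left 2-adjoints to the respective forgetful functors (the composite of the forgetful functors $\RAT_\eff \to \RAT_\tab \to \RAT$ being $U$), and since left adjoints are unique up to canonical equivalence, the equivalence established in the previous paragraph witnesses commutativity of the triangle up to coherent 2-natural isomorphism. I would state this last step briefly, as it requires no new computation beyond uniqueness of adjoints and the two prior 2-adjunctions.
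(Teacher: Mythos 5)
Your proposal is correct and follows essentially the same route as the paper: the paper likewise treats the 2-adjunction as a routine analogue of the coreflexive-splitting adjunction (Proposition~\ref{thm:tabularcompletion}), obtains the equivalence $\Split_\per(\X) \simeq \Split_\eq(\Split_\cor(\X))$ from the standard allegory-theoretic fact that a partial equivalence relation decomposes into a coreflexive domain plus an equivalence relation on it (cited from Freyd--Scedrov 2.169 rather than proved), and then deduces the commuting triangle immediately. The only difference is that you sketch the details the paper delegates to citations.
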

\begin{proof}
  The proof that $\Split_\per$ defines a 2-functor which is left adjoint to the forgetful 2-functor is straightforward, and similar to Proposition \ref{thm:tabularcompletion}. A proof that $\Split_\per(\X) \simeq \Split_\eq(\Split_\cor(\X))$ can be found in ~\cite[2.169]{Freyd90}, it follows immediately that our diagram of left 2-adjoints commutes. 
\end{proof}

\subsection{Definable Categories}

The final idea involved in our variety theorem is that of a definable category ~\cite{Kuber2018}. Definable categories come from categorical universal algebra. If we take regular categories as our notion of theory, regular functors into $\SET$ as our notion of model, and natural transformations as our model morphisms, then definable categories are the corresponding varieties. We follow the exposition of ~\cite{LackTendas20}, and in particular we formulate definable categories via finite injectivity classes:

\begin{definition}[Finite Injectivity Class]
  Let $h : A \to B$ be an arrow of $\X$. Then an object $C$ of $\X$ is said to be \emph{$h$-injective} in case the function of hom-sets $\X(h,C) : \X(B,C) \to \X(A,C)$ defined by $X(h,C)(f) = hf$ is injective. If $M$ is a finite set of arrows in $\X$, write $\mathsf{inj}(M)$ for the full subcategory on the objects $C$ of $\X$ that are $h$-injective for each $h \in M$. We say that each $\mathsf{inj}(M)$ is a \emph{finite injectivity class} in $\X$. 
\end{definition}
Definable categories are defined relative to an ambient locally finitely presentable category. It is an open problem to give a free-standing characterization ~\cite{Kuber2018}. 
\begin{definition}
  A category is said to be \emph{definable} if it arises as a finite injectivity class in some locally finitely presentable category. If $\X$ and $\Y$ are definable categories, a functor $F : \X \to \Y$ is called an \emph{interpretation} in case it preserves products and directed colimits. Let $\mathsf{DEF}$ be the 2-category with definable categories as 0-cells, interpretations as 1-cells, and natural transformations as 2-cells.
\end{definition}
From any definable category we can obtain an exact category by considering its interpretations into $\SET$.
\begin{proposition}[~\cite{LackTendas20}]
  If $\X$ is a definable category then the functor category $\DEF(\X,\SET)$ is an exact category. This extends to a 2-functor $\DEF(\_\,,\SET) : \DEF^\op \to \EX$. 
\end{proposition}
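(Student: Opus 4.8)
The plan is to prove the two claims---that $\DEF(\X,\SET)$ is exact, and that the assignment is $2$-functorial---by recognising this proposition as one half of the regular/exact form of Gabriel--Ulmer duality, and reducing exactness to a biduality statement. The entry point I would use is the structural characterisation of definable categories from ~\cite{Kuber2018,LackTendas20}: every definable category $\X$ is equivalent to the category $\REG(\C,\SET)$ of regular functors into $\SET$ for some small regular category $\C$. Passing to the exact completion $\D = \C_\exreg$---which is exact by the exact completion construction recalled above---and using its universal property (an exact category such as $\SET$ does not distinguish $\C$ from $\C_\exreg$), we obtain $\X \simeq \REG(\C,\SET) \simeq \EX(\D,\SET)$ with $\D$ a small \emph{exact} category. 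The whole proposition then reduces to showing that evaluation recovers $\D$ from its category of models.

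Concretely, I would first define the evaluation functor $E : \D \to \DEF(\X,\SET)$ by $E(d) = \mathsf{ev}_d$, where $\mathsf{ev}_d(M) = M(d)$ for a model $M \in \EX(\D,\SET) \simeq \X$. The easy half is that $\mathsf{ev}_d$ is genuinely an interpretation: products and directed colimits in the functor category $\EX(\D,\SET)$ are computed pointwise, and evaluation at a fixed $d$ commutes with them, so $\mathsf{ev}_d$ preserves both. As a sanity check one also sees that $\DEF(\X,\SET)$ has finite limits computed pointwise in $[\X,\SET]$, since a finite limit of interpretations still preserves products (limits commute with limits) and still preserves directed colimits (finite limits commute with directed colimits in $\SET$). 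It is worth stressing, however, that the regular-epimorphism structure is \emph{not} pointwise---pointwise quotients need not preserve products---which is exactly why exactness cannot be established by a naive pointwise computation and must instead be transported along $E$.

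The heart of the argument, and the main obstacle, is to show that $E : \D \to \DEF(\X,\SET)$ is an equivalence; exactness of $\DEF(\X,\SET)$ then follows immediately, since exactness is invariant under equivalence and $\D$ is exact. Fullness and faithfulness of $E$ amount to a Yoneda-style computation identifying natural transformations between evaluation functors with morphisms of $\D$. The genuinely hard part is \emph{essential surjectivity}: every interpretation $\X \to \SET$ must be shown isomorphic to some $\mathsf{ev}_d$. This is the regular/exact analogue of the fact that a finitely presentable object of a locally finitely presentable category is determined by the functor it represents, and in the module-theoretic prototype it is the statement that every definable functor is represented by a pp-pair. Its proof genuinely uses exactness of $\D$---one presents an arbitrary interpretation as the quotient of a representable by an effective equivalence relation, matching the description of objects of $\C_\exreg$ as equivalence relations in $\C$---and I expect essentially all of the real work to live here. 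This is also the step for which I would lean most directly on ~\cite{LackTendas20}.

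Finally, for $2$-functoriality I would define the action on $1$-cells by precomposition: an interpretation $G : \Y \to \X$ induces $\DEF(G,\SET) : \DEF(\X,\SET) \to \DEF(\Y,\SET)$, $M \mapsto M \circ G$, which is again an interpretation because interpretations are closed under composition, and which, transported along the equivalences $\DEF(\X,\SET) \simeq \D_\X$ and $\DEF(\Y,\SET) \simeq \D_\Y$, corresponds to an exact functor $\D_\X \to \D_\Y$ and is therefore regular. The action on $2$-cells is given by whiskering, and the functor laws---preservation of identities, composition, and horizontal composition up to the required coherence---are then routine. This yields the contravariant $2$-functor $\DEF(\_\,,\SET) : \DEF^\op \to \EX$.
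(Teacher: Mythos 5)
The paper imports this result from \cite{LackTendas20} without giving a proof of its own, and your sketch is a faithful reconstruction of the argument in that reference: represent $\X$ as $\EX(\D,\SET)$ for a small exact $\D$ (via $\REG(\C,\SET)\simeq\EX(\C_\exreg,\SET)$) and show the evaluation functor $\D \to \DEF(\X,\SET)$ is an equivalence, so that exactness is transported from $\D$ rather than computed pointwise. You correctly isolate essential surjectivity of evaluation as the genuinely hard step and appropriately defer it to the cited source, which is exactly where the paper itself places that burden.
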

Similarly, for any regular category the associated category of regular functors into $\SET$ is definable. 
\begin{proposition}[~\cite{LackTendas20}]
  If $\C$ is a regular category then the functor category $\REG(\C,\SET)$ is definable. This extends to a 2-functor $\REG(\_\,,\SET) : \REG \to \DEF^\op$. 
\end{proposition}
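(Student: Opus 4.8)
The plan is to realize $\REG(\C,\SET)$ as an injectivity class inside a fixed locally finitely presentable category, and then to check that precomposition is suitably functorial. Assuming, as we may, that $\C$ is essentially small, Gabriel--Ulmer duality makes the category $\mathsf{Lex}(\C,\SET)$ of finite-limit-preserving functors $\C \to \SET$ locally finitely presentable, with the representables $\C(X,-)$ among its finitely presentable objects and $\mathsf{Lex}(\C,\SET)(\C(X,-),F) \cong FX$ by Yoneda. Since a regular functor is precisely a lex functor that in addition preserves covers, there is a full inclusion $\REG(\C,\SET) \hookrightarrow \mathsf{Lex}(\C,\SET)$, and it remains only to carve out its image by injectivity conditions.

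The heart of the argument is to show that preservation of a single cover is an injectivity condition. Given a cover $e : X \to Y$ in $\C$ with kernel pair $p,q : R \rightrightarrows X$, every $F$ in $\mathsf{Lex}(\C,\SET)$ sends $R$ to the kernel pair of $Fe$, so that $Fe$ is surjective exactly when the induced comparison from the coequalizer of $(Fp,Fq)$ to $FY$ is an isomorphism. The key move is to encode this a priori colimit/epimorphism condition as injectivity: from the kernel pair one builds a map $h_e$ between finitely presentable objects of $\mathsf{Lex}(\C,\SET)$ for which $h_e$-injectivity of $F$ is equivalent to surjectivity of $Fe$, the point being that lex-ness of $F$ already forces one half of the corresponding orthogonality (isomorphism) condition, leaving exactly an injectivity requirement. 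Ranging over covers, $\REG(\C,\SET) = \mathsf{inj}(\{h_e\})$ displays it as an injectivity class with respect to maps of finitely presentable objects, hence as a definable category; equivalently one may verify directly that $\REG(\C,\SET)$ is closed in $\mathsf{Lex}(\C,\SET)$ under products, directed colimits, and pure subobjects, using that finite limits commute with products and directed colimits in $\SET$ and that surjections are stable under both. The construction of $h_e$ -- turning surjectivity into injectivity while staying within finitely presentable objects -- is the step I expect to be the main obstacle.

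For the $2$-functoriality I would send a regular functor $G : \C \to \D$ to precomposition $\REG(G,\SET) = (-)\circ G : \REG(\D,\SET) \to \REG(\C,\SET)$. This is well defined because a composite of regular functors is regular, and it reverses the direction of $1$-cells, which is exactly why the codomain is $\DEF^\op$. Since products and directed colimits in each $\REG(-,\SET)$ are computed pointwise (as established in the first part), precomposition preserves them, so $(-)\circ G$ is an interpretation. On a $2$-cell $\tau : G \Rightarrow G'$ I would take the whiskering $F \mapsto F\tau$, yielding a natural transformation $(-)\circ G \Rightarrow (-)\circ G'$; this assignment is covariant on $2$-cells and contravariant on $1$-cells, precisely the variance of a $2$-functor into $\DEF^\op$. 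The remaining points -- preservation of identities and composites, and compatibility of whiskering with vertical and horizontal composition -- are routine naturality checks and present no real difficulty.
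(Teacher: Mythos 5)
The paper gives no proof of this proposition --- it is imported wholesale from the cited reference --- so the comparison here is with the argument in the literature. Your overall strategy (embed $\REG(\C,\SET)$ fully into the locally finitely presentable category $\mathsf{Lex}(\C,\SET)$ and cut it out by the conditions ``$F$ preserves the cover $e$'') is the right one and is essentially the strategy of the source. But the step you yourself flag as the main obstacle --- producing a morphism $h_e$ between finitely presentable objects whose injectivity condition is equivalent to surjectivity of $Fe$ --- is the entire content of the proposition, and as written it is a genuine gap, not a deferred routine check. Worse, under the paper's definition of $h$-injectivity (the hom-map $\X(h,C)$ is an \emph{injective function}), the obvious candidate $h_e = \C(e,-) : \C(Y,-) \to \C(X,-)$ does the wrong thing: by Yoneda, $\mathsf{Lex}(\C(e,-),F) \cong Fe$, so $h_e$-injectivity in this sense says that $Fe$ is \emph{injective}, and the natural limit/colimit manipulations (cokernel pairs, codiagonals) again yield conditions equivalent to injectivity of $Fe$, never surjectivity. (With the classical notion of injectivity --- every map out of the domain extends along $h$, i.e.\ the hom-map is \emph{surjective} --- the choice $h_e = \C(e,-)$ works on the nose; reconciling the two notions is exactly the nontrivial characterization theorem of Kuber--Rosick\'y.) Note also that your class $\{h_e\}$ is indexed by all covers of $\C$ and is in general infinite, which is compatible with the cited literature's definition (a small set of morphisms between finitely presentable objects) but not with the paper's wording ``finite set of arrows'' taken literally.

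The route you mention only as an aside is the one to make primary: show that $\REG(\C,\SET)$ is closed in $\mathsf{Lex}(\C,\SET)$ under products, directed colimits, and pure subobjects, then invoke the Kuber--Rosick\'y characterization to convert these closure properties into an injectivity class with respect to finitely presentable morphisms. Products and directed colimits are immediate as you say; the pure-subobject closure is the only substantive point and is precisely where the surjectivity conditions are absorbed: if $m : F' \to F$ is pure and $F$ preserves the cover $e$, then any $y \in F'Y$, viewed as a map $\C(Y,-) \to F'$, has composite into $F$ extending along $\C(e,-)$ (since $Fe$ is surjective), and purity with respect to the finitely presentable morphism $\C(e,-)$ lifts that extension back to $F'$, so $F'e$ is surjective. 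This should be spelled out rather than hidden inside an ``equivalently''. The $2$-functoriality paragraph is fine as it stands: products and directed colimits in $\REG(-,\SET)$ are pointwise, so precomposition along a regular functor is an interpretation, and whiskering supplies the $2$-cells with the correct variance.
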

If the category in question is exact, then considering interpretations of the resulting definable category into $\SET$ yields the original exact category. This lifts to the 2-categorical setting.
\begin{proposition}[~\cite{LackTendas20}]
  There is an adjunction of 2-categories $\REG(-,\SET) : \REG \dashv \DEF^\op : \DEF(-,\SET)$ which specializes to an equivalence of 2-categories $\REG(-,\SET) : \EX \simeq \DEF^\op : \DEF(-,\SET)$.
\end{proposition}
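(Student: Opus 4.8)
The plan is to treat this as a double-dualization adjunction with respect to the object $\SET$, which lives in $\REG$ as an exact category. The two 2-functors $L = \REG(-,\SET) : \REG \to \DEF^\op$ and $R = \DEF(-,\SET) : \DEF^\op \to \EX \hookrightarrow \REG$ are already in hand from the preceding propositions, so what remains is to exhibit an adjunction $L \dashv R$ and then to pin down the subcategories on which the unit and counit become equivalences. I would construct the unit and counit as evaluation functors: for $\C \in \REG$ the unit component $\eta_\C : \C \to \DEF(\REG(\C,\SET),\SET)$ sends an object $c$ to $\mathrm{ev}_c : F \mapsto Fc$, and dually the counit component $\varepsilon_\D : \D \to \REG(\DEF(\D,\SET),\SET)$ sends $d$ to $\mathrm{ev}_d : G \mapsto Gd$.

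The bulk of the routine bookkeeping is checking that these are 1-cells of the correct kind, and both reduce to a single observation: finite limits, regular epimorphisms, products, and directed colimits in each of the functor categories $\REG(\C,\SET)$ and $\DEF(\D,\SET)$ are all computed pointwise in $\SET$. Since a regular functor preserves finite limits and regular epis while an interpretation preserves products and directed colimits, evaluation at a fixed object inherits exactly the preservation properties demanded of it, so $\eta_\C$ is regular and $\varepsilon_\D$ is an interpretation. The adjunction itself then follows from the symmetry of ``bi-interpretations'': by transposing in $\CAT$, both a 1-cell $\C \to \DEF(\D,\SET)$ in $\REG$ and a 1-cell $\D \to \REG(\C,\SET)$ in $\DEF$ correspond to a single functor $\C \times \D \to \SET$ that is regular in its first argument and an interpretation in its second, yielding a 2-natural equivalence $\DEF^\op(L\C,\D) \simeq \REG(\C, R\D)$. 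With the unit and counit identified as evaluation, 2-naturality is immediate and the triangle identities collapse to the tautology that evaluating twice and then projecting returns the original evaluation.

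The substantive part, and the step I expect to be the main obstacle, is identifying the fixed subcategories: showing that $\varepsilon_\D$ is an equivalence for every definable $\D$, and that $\eta_\C$ is an equivalence precisely when $\C$ is exact. The latter is the heart of the matter, being a reconstruction theorem asserting that an exact category is recovered as the interpretations-into-$\SET$ of its definable category of models. The intuition is clean: the codomain $\DEF(\REG(\C,\SET),\SET)$ is always exact by the earlier proposition, and in fact behaves like the exact completion $\C_\exreg$ of Corollary~\ref{cor:bigsquare}, so $\eta_\C$ can only be an equivalence once the quotients of equivalence relations supplied by exactness are already present in $\C$. Making this precise is where the real work lies: one must identify the finitely presentable objects of $\REG(\C,\SET)$ with objects of $\C$ and run a density argument to obtain both full faithfulness and essential surjectivity of $\eta_\C$ for exact $\C$, together with the analogous reconstruction for $\varepsilon_\D$. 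This analysis is exactly the content of the Gabriel-Ulmer-style duality carried out in \cite{LackTendas20}; once it is in place, the adjunction $L \dashv R$ restricts to the claimed 2-equivalence $\REG(-,\SET) : \EX \simeq \DEF^\op : \DEF(-,\SET)$.
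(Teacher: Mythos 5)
The paper does not prove this proposition at all—it is imported verbatim from \cite{LackTendas20}—so there is no in-paper argument to compare against. Your sketch is the standard evaluation/double-dualization construction that the cited source itself uses (unit and counit as evaluation functors, well-definedness via pointwise computation of finite limits, regular epis, products, and directed colimits in the two functor categories, adjunction via the two-variable transposition), and you correctly isolate the genuinely substantive step—the reconstruction results showing $\varepsilon_\D$ is an equivalence for every definable $\D$ and $\eta_\C$ is an equivalence precisely when $\C$ is exact—and defer it to \cite{LackTendas20}, which is exactly the paper's own treatment.
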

This gives another way to describe the exact completion of a regular category:
\begin{proposition}[~\cite{LackTendas20}]
  If $\C$ is regular then $\C_\exreg \simeq \DEF(\REG(\C\,,\SET)\,,\SET)$.
\end{proposition}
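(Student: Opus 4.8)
The plan is to reduce the statement to two facts already established: that the exact completion is a reflection (the 2-adjunction $\exreg : \REG \dashv \EX : U$), and that exact categories are dual to definable categories (the 2-equivalence $\REG(-,\SET) : \EX \simeq \DEF^\op : \DEF(-,\SET)$). The single new observation I need is that passing from $\C$ to its exact completion does not change the category of regular functors into $\SET$.

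First I would observe that $\SET$ is an exact category, so the 2-adjunction $\exreg : \REG \dashv \EX : U$ applies with $\SET$ as target. Since $\EX$ is a \emph{full} 2-subcategory of $\REG$, the hom-category $\REG(\C_\exreg,\SET)$ of regular functors and natural transformations coincides with $\EX(\C_\exreg,\SET)$. The adjunction, applied to the exact object $\SET$, then yields
\[ \REG(\C,\SET) \;\simeq\; \EX(\C_\exreg,\SET) \;=\; \REG(\C_\exreg,\SET), \]
so $\C$ and its exact completion have equivalent categories of regular models in $\SET$. Next I would invoke the equivalence $\DEF(-,\SET) : \DEF^\op \simeq \EX$, applied to the exact category $\C_\exreg$, to obtain
\[ \C_\exreg \;\simeq\; \DEF\bigl(\REG(\C_\exreg,\SET),\,\SET\bigr). \]
Finally I would combine the two displays: functoriality of $\DEF(-,\SET)$ carries the equivalence $\REG(\C,\SET) \simeq \REG(\C_\exreg,\SET)$ to an equivalence $\DEF(\REG(\C_\exreg,\SET),\SET) \simeq \DEF(\REG(\C,\SET),\SET)$, and composing with the preceding equivalence gives $\C_\exreg \simeq \DEF(\REG(\C,\SET),\SET)$, as required.

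The hard part will be purely the 2-categorical bookkeeping rather than any genuine mathematical difficulty. Specifically, I would need to check that the equivalence $\REG(\C,\SET) \simeq \REG(\C_\exreg,\SET)$ really is the one induced by precomposition with the reflection unit $\C \to U(\C_\exreg)$, so that it is natural and respects the 2-cells, and that $\DEF(-,\SET)$ genuinely preserves equivalences of definable categories. Both hold because we are working throughout with honest 2-adjunctions and 2-equivalences, so these formal manipulations go through automatically; the substantive content of the proposition is carried entirely by the earlier results on exact completions and definable duality.
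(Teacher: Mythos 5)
Your argument is correct and matches the derivation the paper intends: the paper states this proposition only as a citation to Lack--Tendas, placed immediately after the two results you invoke, and the ``other way to describe the exact completion'' is obtained exactly as you do it, by combining the universal property of $\exreg : \REG \dashv \EX : U$ (which gives $\REG(\C,\SET) \simeq \REG(\C_\exreg,\SET)$, using fullness of $\EX$ in $\REG$) with the duality $\REG(-,\SET) : \EX \simeq \DEF^\op : \DEF(-,\SET)$. Your closing remarks on the bookkeeping are also the right ones to check, and they go through since any equivalence between definable categories preserves products and directed colimits and hence lives in $\DEF$.
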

Thus, we may summarize the relationship between definable, regular, and exact categories as follows:
\begin{corollary}[{\cite[Section 9,10]{LackTendas20}}]\label{cor:deftriangle}
  The following diagram of left 2-adjoints commutes.
  \[\begin{tikzcd}
  \REG \ar[rd,"{\REG(-,\SET)}"] \ar[d,"\exreg"'] \\
  \EX \ar[r,"{\REG(-,\SET)}"',"\sim"] & \DEF^\op
  \end{tikzcd}\]
  where the arrow marked with $\sim$ is part of a 2-equivalence. 
\end{corollary}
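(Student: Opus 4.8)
The plan is to show that the two composite 2-functors $\REG \to \DEF^\op$ agree: the diagonal $\REG(-,\SET)$ and the composite $\REG \xrightarrow{\exreg} \EX \xrightarrow{\REG(-,\SET)} \DEF^\op$. Concretely this amounts to producing, pseudonaturally in a regular category $\C$, an equivalence $\REG(\C_\exreg,\SET) \simeq \REG(\C,\SET)$. I would obtain this by assembling the three preceding propositions. First, the computation of the exact completion gives $\C_\exreg \simeq \DEF(\REG(\C,\SET),\SET)$; applying the 2-functor $\REG(-,\SET)$, which preserves equivalences, yields $\REG(\C_\exreg,\SET) \simeq \REG(\DEF(\REG(\C,\SET),\SET),\SET)$. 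Writing $\D = \REG(\C,\SET)$ for the resulting definable category, the proposition that $\DEF(\D,\SET)$ is exact shows the inner expression is an exact category, so that $\REG(-,\SET)$ and $\DEF(-,\SET)$ act on it as the mutually inverse equivalences $\EX \simeq \DEF^\op$. Hence $\REG(\DEF(\D,\SET),\SET) \simeq \D = \REG(\C,\SET)$, closing the chain.

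A more conceptual route, which I find cleaner, is to invoke the universal property packaged in the exact completion 2-adjunction $\exreg : \REG \dashv \EX : U$. Since $U$ is the inclusion of a full sub-2-category, for any exact $\mathcal{E}$ the adjunction supplies an equivalence $\REG(\C_\exreg,\mathcal{E}) \simeq \REG(\C,\mathcal{E})$ given by restriction along the reflection unit $\C \to \C_\exreg$. Because $\SET$ is exact, setting $\mathcal{E} = \SET$ is precisely the desired equivalence, and its being a component of an adjunction unit makes it automatically pseudonatural in $\C$. This route simultaneously identifies the comparison 2-cell witnessing commutativity of the triangle as the image under $\REG(-,\SET)$ of the exact-completion unit.

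Perhaps the slickest packaging is to pass to right 2-adjoints: a triangle of left 2-adjoints commutes precisely when the corresponding triangle of right 2-adjoints does, by uniqueness of adjoints and the calculus of mates. The right 2-adjoints here are $U : \EX \to \REG$ for $\exreg$, and $\DEF(-,\SET)$ for each copy of $\REG(-,\SET)$. The triangle of right adjoints then asserts that $\DEF(-,\SET) : \DEF^\op \to \REG$ factors as $\DEF^\op \xrightarrow{\DEF(-,\SET)} \EX \xrightarrow{U} \REG$ --- that is, that $\DEF(\D,\SET)$ is exact for every definable $\D$ --- which is exactly one of the propositions already established. On this reading the corollary is essentially a restatement of that exactness fact.

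The only real obstacle is the two-dimensional bookkeeping: the statement concerns a commuting triangle of 2-functors, so the objectwise equivalences above must be upgraded to a coherent pseudonatural equivalence rather than a mere objectwise one. In the first two approaches this is handled by observing that every equivalence used arises as a component of a 2-adjunction or 2-equivalence already provided by the cited propositions, so the requisite pseudonaturality and coherence come for free; in the right-adjoint approach the issue largely dissolves, since the factorization $\DEF(-,\SET) = U \circ \DEF(-,\SET)$ holds on the nose and the mate correspondence transports commutativity back to the left adjoints.
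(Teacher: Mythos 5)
Your proposal is correct and matches the paper's treatment: the corollary is stated as an immediate consequence of the preceding propositions (all imported from Lack--Tendas), and each of your three routes is a legitimate way of assembling exactly those ingredients, with the right-adjoint/mates formulation being the cleanest since the factorization $\DEF(-,\SET) = U \circ \DEF(-,\SET)$ holds on the nose. No gaps.
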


The ingredients of our variety theorem for relational algebraic theories are now assembled. Together, Proposition \ref{prop:splittingthing}, Corollary \ref{cor:bigsquare}, and Corollary \ref{cor:deftriangle} give:
\begin{corollary}\label{cor:big-2-diagram}
  There following diagram of left 2-adjoints commutes:
\[\begin{tikzcd}
\RAT \ar[rd,"\Split_\per"'] \ar[r,"\Split_\cor"] & \RAT_\tab \ar[r,"\Map","\sim"'] \ar[d,"\Split_\eq"] & \REG \ar[d,"\exreg"'] \ar[dr,"{\REG(-,\SET)}"] \\
& \RAT_\eff \ar[r,"\Map"',"\sim"] & \EX \ar[r,"{\REG(-,\SET)}"',"\sim"] & \DEF^\op
\end{tikzcd}\]
where the arrows marked with $\sim$ are part of a 2-equivalence. 
\end{corollary}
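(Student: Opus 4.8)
The plan is to recognize that the large square-plus-triangles in Corollary \ref{cor:big-2-diagram} is nothing more than the three previously-established diagrams pasted together along their shared edges, so that no new mathematics is needed beyond bookkeeping. First I would identify the three regions. The left-hand triangle on the 0-cells $\RAT$, $\RAT_\tab$, $\RAT_\eff$, with legs $\Split_\cor$ and $\Split_\eq$ and hypotenuse $\Split_\per$, is exactly the commuting triangle supplied by Proposition \ref{prop:splittingthing}. The central square on $\RAT_\tab$, $\REG$, $\RAT_\eff$, $\EX$, with edges $\Map$, $\Split_\eq$, $\exreg$, and $\Map$, is the commuting square of Corollary \ref{cor:bigsquare}. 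The right-hand triangle on $\REG$, $\EX$, $\DEF^\op$, with legs $\exreg$ and $\REG(-,\SET)$ and hypotenuse $\REG(-,\SET)$, is the commuting triangle of Corollary \ref{cor:deftriangle}.

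Next I would check that these three regions tile the large diagram exactly, with matching boundaries and no leftover 1-cells. The edge $\Split_\eq : \RAT_\tab \to \RAT_\eff$ is shared by the left triangle and the central square, while the edge $\exreg : \REG \to \EX$ is shared by the central square and the right triangle; the remaining 1-cells each belong to a single region. Since every 1-cell of the large diagram occurs in precisely one region or as a boundary common to two adjacent regions, the commutativity of each region pastes to commutativity of the whole: the invertible 2-cells witnessing the three component commutativities compose along the shared edges $\Split_\eq$ and $\exreg$ to furnish a witnessing 2-cell for each pair of parallel paths. I would also note that the two diagonal arrows, $\Split_\per$ and $\REG(-,\SET)$ out of $\REG$, are the pre-existing hypotenuses of the two triangles rather than freshly formed composites, so no separate verification that a composite of left adjoints is a left adjoint is required.

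I would then confirm that the large diagram is genuinely one of left 2-adjoints, as claimed. Each of $\Split_\cor$, $\Split_\eq$, $\Split_\per$ is a left 2-adjoint by Propositions \ref{thm:tabularcompletion}, \ref{thm:effectivecompletion}, and \ref{prop:splittingthing}; $\exreg$ and $\REG(-,\SET)$ are left 2-adjoints by Corollary \ref{cor:deftriangle}; and the two copies of $\Map$ are halves of 2-equivalences, hence in particular left 2-adjoints. This secures the parenthetical assertions about the arrows marked $\sim$.

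The hard part, such as it is, will not be any deep argument but the 2-categorical coherence of the pasting: I must ensure that the invertible 2-cells filling the three subdiagrams are mutually compatible, so that gluing them along $\Split_\eq$ and $\exreg$ yields a coherent family of 2-cells rather than mere commutativity up to some further, unaccounted-for coherence datum. Because each component commutativity is either a strict equality of 2-functors or one of the canonical 2-equivalences delivered by the cited results, this compatibility is automatic and the pasting goes through without incident; I therefore expect the entire proof to reduce to the observation that the diagram is the evident amalgam of Proposition \ref{prop:splittingthing}, Corollary \ref{cor:bigsquare}, and Corollary \ref{cor:deftriangle}.
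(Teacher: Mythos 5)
Your proposal is correct and matches the paper's own argument: Corollary \ref{cor:big-2-diagram} is obtained precisely by pasting Proposition \ref{prop:splittingthing}, Corollary \ref{cor:bigsquare}, and Corollary \ref{cor:deftriangle} along the shared edges $\Split_\eq$ and $\exreg$, with no further input needed. Your additional bookkeeping about which 1-cells are shared and why each arrow is a left 2-adjoint is consistent with (and slightly more explicit than) the paper's one-line justification.
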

Now our variety theorem is an immediate consequence of Corollary \ref{cor:big-2-diagram}:
\begin{theorem}\label{thm:relvariety}
  There is an adjunction of 2-categories $\mathsf{Mod} : \RAT \dashv \DEF^\op : \mathsf{Th}$
\end{theorem}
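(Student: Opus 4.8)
The plan is to read both adjoint 2-functors directly off the commuting diagram of left 2-adjoints in Corollary \ref{cor:big-2-diagram}, and then invoke the standard 2-categorical fact that a composite of left 2-adjoints is again a left 2-adjoint whose right adjoint is the composite of the individual right adjoints, taken in the opposite order. Concretely, I would define the models 2-functor as the top edge of that diagram,
\[ \Mod = \REG(-,\SET) \circ \Map \circ \Split_\cor : \RAT \to \DEF^\op, \]
and define the theory 2-functor as the corresponding composite of right adjoints,
\[ \mathsf{Th} = U \circ \Rel \circ \DEF(-,\SET) : \DEF^\op \to \RAT, \]
where $U$ is right adjoint to $\Split_\cor$ (Proposition \ref{thm:tabularcompletion}), $\Rel$ is the inverse 2-equivalence of $\Map$ (Theorem \ref{thm:tabregequiv}), and $\DEF(-,\SET)$ is right adjoint to $\REG(-,\SET)$. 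Since each factor of $\Mod$ is a left 2-adjoint, the composite is a left 2-adjoint with the displayed right adjoint, which is exactly the assertion of the theorem.

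The only part carrying genuine content is to check that this abstractly-defined $\Mod$ really computes categories of models, so that the result deserves to be called a variety theorem. For a relational theory $\X$ the category of models and model morphisms is the hom-category $\RAT(\X,\Rel)$, and I would exhibit a chain of equivalences identifying it with $\Mod(\X)$. First, since $\Rel = \Rel(\SET)$ and $\SET$ is exact, hence regular, $\Rel$ is a tabular (indeed effective) $0$-cell, so the adjunction $\Split_\cor \dashv U$ yields $\RAT(\X,\Rel) \simeq \RAT_\tab(\Split_\cor(\X),\Rel)$. Next, the 2-equivalence $\Map : \RAT_\tab \simeq \REG$ transports this to $\REG(\Map(\Split_\cor(\X)),\Map(\Rel))$, and since $\Map(\Rel) \cong \SET$ this is precisely $\REG(\Map(\Split_\cor(\X)),\SET) = \Mod(\X)$. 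Composing gives $\Mod(\X) \simeq \RAT(\X,\Rel)$, 2-naturally in $\X$, and the target is definable by construction.

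The hard part will be the bookkeeping at the level of 2-cells: I must verify that each equivalence of hom-categories above is compatible with the relevant notion of morphism, so that model morphisms — lax transformations into $\Rel$ — are matched with the natural transformations of regular functors into $\SET$ that serve as 2-cells in $\DEF^\op$. The delicate point is that a lax transformation between relational morphisms has map components (Lemma \ref{lem:totalcomponents}), and under $\Map \simeq \Rel$ these components are exactly the components of an ordinary natural transformation on the $\Map$ side, with every such natural transformation extending back to a lax transformation; confirming that this correspondence is functorial, respects composition, and is compatible with the 2-natural counit of $\Split_\cor \dashv U$ at the tabular object $\Rel$ is the routine but nontrivial work underlying the word ``immediate.'' Everything else is the purely formal composition of the adjunctions already recorded in Corollary \ref{cor:big-2-diagram}.
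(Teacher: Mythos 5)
Your proposal is correct and follows exactly the paper's route: the theorem is obtained by composing the left 2-adjoints along the top of the diagram in Corollary \ref{cor:big-2-diagram} (with the right adjoint $\mathsf{Th}$ the reverse composite of right adjoints), and your identification of $\Mod(\X)$ with $\RAT(\X,\Rel)$ via $\RAT_\tab(\Split_\cor(\X),\Rel) \simeq \REG(\Map(\Split_\cor(\X)),\SET)$ is precisely the discussion the paper gives immediately after the theorem statement.
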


It may not be immediately clear what this tells us about the category of models and model morphisms of a relational algebraic theory, so let us briefly discuss. Consider an arbitrary relational algebraic theory $\X$. Our universe of models $\Rel$ is tabular, so models of $\X$ and models of $\Split_\cor(\X)$ are the same thing since the image of any coreflexive in $\X$ already splits in $\Rel$. Then the category of models of $\X$ and model morphisms thereof is $\RAT_\tab(\Split_\cor(\X),\Rel)$. When we transport this across the 2-equivalence $\Map : \RAT_\tab \stackrel{\sim}{\to} \REG$ it becomes $\REG(\Map(\Split_\cor(\X)),\SET)$, a definable category. Thus, categories of models and model morphisms of regular algebraic theories are definable categories. 

Now, $\SET$ is exact, so $\Rel$ is effective, which means that much like the models of $\X$ and $\Split_\cor(\X)$, the models of $\X$ and $\Split_\per(\X)$ are the same. We have shown that $\RAT_\eff \simeq \EX \simeq \DEF^\op$, and so the question of when two relational algebraic theories generate the same category of models and model morphisms can be answered as follows:
\begin{theorem}\label{thm:persplit}
  Two relational algebraic theories $\X$ and $\Y$ present equivalent definable categories if and only if $\Split_\per(\X)$ and $\Split_\per(\Y)$ are equivalent.
\end{theorem}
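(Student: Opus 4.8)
The plan is to derive Theorem~\ref{thm:persplit} as a direct corollary of the machinery already assembled, principally Theorem~\ref{thm:relvariety} together with the 2-equivalences recorded in Corollary~\ref{cor:big-2-diagram}. The key observation is that the category of models and model morphisms of a relational algebraic theory $\X$ depends only on $\Split_\per(\X)$. First I would make precise the claim, gestured at in the discussion preceding the theorem, that since $\Rel$ is effective (because $\SET$ is exact), a morphism $\X \to \Rel$ factors uniquely through the unit $\X \to \Split_\per(\X)$; indeed every partial equivalence relation in $\X$ is sent to a partial equivalence relation in $\Rel$, which already splits there. Thus the universal property of the 2-adjunction $\Split_\per : \RAT \dashv \RAT_\eff : U$ from Proposition~\ref{prop:splittingthing} yields a 2-natural equivalence $\RAT(\X,\Rel) \simeq \RAT_\eff(\Split_\per(\X),\Rel)$, and the latter is exactly the category of models of the effective theory $\Split_\per(\X)$.

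The next step is to transport this along the chain of 2-equivalences. By Corollary~\ref{cor:big-2-diagram} we have $\RAT_\eff \simeq \EX \simeq \DEF^\op$, so the definable category $\mathsf{Mod}(\X)$ presented by $\X$ is equivalent to the one presented by $\Split_\per(\X)$, and the same holds verbatim for $\Y$. Consequently, if $\Split_\per(\X) \simeq \Split_\per(\Y)$ in $\RAT_\eff$, then applying the composite 2-functor $\Map$ followed by $\REG(-,\SET)$ produces an equivalence of the associated definable categories, establishing the ``if'' direction. The argument here is essentially formal: equivalences are preserved by 2-functors, and both the models functor and its passage through $\RAT_\eff$ have been arranged to be equivalences on the relevant subcategories.

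The converse is where the genuine content lies, and I expect it to be the main obstacle. Suppose $\mathsf{Mod}(\X)$ and $\mathsf{Mod}(\Y)$ are equivalent definable categories. I would run the equivalences backwards: under $\DEF^\op \simeq \EX$ the definable categories correspond to exact categories, and under $\EX \simeq \RAT_\eff$ these correspond to effective relational theories, namely $\Split_\per(\X)$ and $\Split_\per(\Y)$. The subtlety is that these are honest 2-\emph{equivalences} of 2-categories, so an equivalence in $\DEF$ reflects back to an equivalence in $\RAT_\eff$ rather than merely to some weaker comparison; one must check that the equivalence $\mathsf{Mod}(\X) \simeq \mathsf{Mod}(\Y)$ really is (or can be taken to be) an interpretation, i.e.\ a 1-cell of $\DEF$, so that it lives in the 2-categorical structure that the equivalences of Corollary~\ref{cor:big-2-diagram} operate on. Granting that each link in $\RAT_\eff \simeq \EX \simeq \DEF^\op$ is a 2-equivalence and that $\mathsf{Mod}(\X) \simeq \Split_\per(\X)$ canonically, the reflection of the equivalence along the composite yields $\Split_\per(\X) \simeq \Split_\per(\Y)$, completing the proof.

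In summary, the whole statement reduces to the single principle that $\mathsf{Mod}$ factors through $\Split_\per$ up to equivalence, after which both directions follow from the fact that a 2-equivalence both preserves and reflects equivalences between objects. I would therefore spend the bulk of the writeup justifying the factorization $\RAT(\X,\Rel) \simeq \RAT_\eff(\Split_\per(\X),\Rel)$ and ensuring the comparison 1-cells are interpretations, treating the transport across Corollary~\ref{cor:big-2-diagram} as the formal, already-established part.
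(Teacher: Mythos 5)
Your proposal is correct and follows essentially the same route as the paper, which likewise derives the theorem from the identification of models of $\X$ with models of $\Split_\per(\X)$ (via effectivity of $\Rel$ and the adjunction of Proposition~\ref{prop:splittingthing}) together with the chain of 2-equivalences $\RAT_\eff \simeq \EX \simeq \DEF^\op$ from Corollary~\ref{cor:big-2-diagram}. The one subtlety you flag --- whether an equivalence of definable categories is an interpretation --- resolves immediately, since any equivalence of categories preserves all limits and colimits and hence in particular products and directed colimits.
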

Compare this to the case of algebraic theories, in which two theories present the same variety in case splitting \emph{all} idempotents yields equivalent categories ~\cite{Adamek2003}. 

\bibliographystyle{plain}
\bibliography{citations}

\end{document}